\documentclass[11pt]{article}
\usepackage{amsmath,amsthm}
\usepackage{amssymb,mathrsfs}
\usepackage{bm,mathtools}
\usepackage{graphics}
\usepackage{tikz}
\usepackage{float}
\usepackage{enumitem}
\usepackage{bm}
\usetikzlibrary{calc}

\usepackage{xcolor}

\usepackage{geometry}
\usepackage[colorlinks=true,
linkcolor=blue,citecolor=blue,pagebackref,
urlcolor=blue]{hyperref}

\makeatletter
\def\@seccntDot{.}
\def\@seccntformat#1{\csname the#1\endcsname\@seccntDot\hskip 0.5em}
\renewcommand\section{\@startsection{section}{1}{\z@}%
	{18\p@ \@plus 6\p@ \@minus 3\p@}%
	{9\p@ \@plus 6\p@ \@minus 3\p@}%
	{\large\bfseries\boldmath}}
\renewcommand\subsection{\@startsection{subsection}{2}{\z@}%
	{12\p@ \@plus 6\p@ \@minus 3\p@}%
	{3\p@ \@plus 6\p@ \@minus 3\p@}%
	{\bfseries\boldmath}}
\renewcommand\subsubsection{\@startsection{subsubsection}{3}{\z@}%
	{12\p@ \@plus 6\p@ \@minus 3\p@}%
	{\p@}%
	{\bfseries\boldmath}}
\makeatother

\usepackage{microtype}
\usepackage{float}

\newcommand{\keywords}[1]{%
  \par\medskip
  \noindent{\small\textbf{Keywords:} #1\par}
  \medskip
}

\newcommand{\msc}[1]{%
  \par\smallskip
  \noindent{\small\textbf{MSC:} #1\par}
  \medskip
}

\theoremstyle{plain}
\newtheorem{theorem}{Theorem}[section]
\newtheorem{lemma}[theorem]{Lemma}

\theoremstyle{definition}

\newtheorem{claim}{Claim}[section]

\numberwithin{equation}{section}
\allowdisplaybreaks
\newcommand{\floor}[1]{\left\lfloor #1\right\rfloor}
\newcommand{\ceil}[1]{\left\lceil #1\right\rceil}
\newcommand{\T}{\mathsf{T}}

\title{The Signless Laplacian Spectral Radius of $tK_3$-Free Graphs}

\author{
	Jing Zeng\thanks{College of Cryptology and Cyber Science, Nankai University, Tianjin 300350, P.R. China. E-mail: \texttt{jingzeng@mail.nankai.edu.cn}. }
}

\date{}

\begin{document}
	\maketitle		
	
	\begin{abstract}
		The signless Laplacian matrix of a graph $G$ is $Q(G)=D(G)+A(G)$, where $D(G)$ and $A(G)$ are the diagonal degree matrix and the adjacency matrix of $G$, respectively. The signless Laplacian spectral radius of $G$ is the largest eigenvalue of $Q(G)$.  For a positive integer $t$, a graph is called $tK_3$-free if it contains no $t$ vertex-disjoint triangles. In this paper, for every fixed $t\geq 2$ and all $n\geq 28t-17$, we determine the unique graph achieving the maximum signless Laplacian spectral radius among all $tK_3$-free graphs of order $n$.
	\end{abstract}

    \keywords{Signless Laplacian spectral radius; $tK_3$-free graph;  extremal graph.}
    \msc{05C35, 05C50}

\section{Introduction}\label{sec-intro}

All graphs considered in this paper are finite, undirected and simple. The \textit{adjacency matrix} and \textit{diagonal degree matrix} of $G$ are denoted by $A(G)$ and $D(G)$, respectively. The largest eigenvalue of $A(G)$ is commonly known as the \textit{(adjacency) spectral radius} of $G$. The \textit{signless Laplacian matrix} of $G$ is defined as $Q(G)=D(G)+A(G)$, and its largest eigenvalue is denoted by $q(G)$ and called the \textit{signless Laplacian spectral radius} of $G$.

Let $F$ be a graph. A graph $G$ is \textit{$F$-free} if it does not contain a subgraph isomorphic to $F$, and $G$ is \textit{$tF$-free} if it does not contain $t$ pairwise vertex-disjoint copies of $F$ as subgraphs.

Mantel's theorem \cite{Mantel} for triangle-free graphs and Tur\'{a}n's theorem \cite{Turan} for $K_{r+1}$-free graphs are two foundational results in extremal graph theory. A general Tur\'{a}n-type problem is to determine the maximum number of edges in an $F$-free graph of order $n$.
In 2010, Nikiforov \cite{Nikispecturan} proposed a spectral analogue of the Tur\'{a}n-type problem, namely, to determine the maximum spectral radius of an $F$-free graph of order $n$. For the adjacency spectral radius, there is a wealth of results on various forbidden subgraphs, such as the complete graph $K_{r+1}$ \cite{Niki2007}, the complete bipartite graphs \cite{BabaiGuiduli,NikiZaraprob}, the cycles of given lengths \cite{ZhaiLinC6,ZhaiWangC4}, the wheels \cite{CioabaDTWheel,ZhaoHLWheel}, and so on.
For the signless Laplacian spectral version of the Tur\'{a}n-type problem, there has also been extensive research on some special graphs, such as the complete graph $K_{r+1}$ \cite{HeJZKr}, the odd cycles \cite{YuanoddC}, the even cycles \cite{NikiYuanevenC}, the union of $t$ vertex-disjoint copies of $K_2$ \cite{YuK2}, and so on. More recent results on both types of spectral Tur\'{a}n-type problems can be found in \cite{LiuNingSparse,LiuMiaobook,ZhengLiSu26,ZhengLF26}. For open problems in spectral graph theory, the reader is referred to \cite{LiuNingUnsolve}.

In 2023, for $t \geq 2$ and $r \geq 2$, Ni, Wang, and Kang \cite{NiWangKang23} determined the unique extremal graph with maximum adjacency spectral radius among all $tK_{r+1}$-free graphs of sufficiently large order $n$. It is natural to consider the corresponding problem for the signless Laplacian spectral radius. When $r=2$, the fact that a graph is $tK_{r+1}$-free implies that it contains no $t$ vertex-disjoint triangles. In 2024, Zhang and Wang \cite{ZhangWang2K3} determined the unique extremal graph with maximum signless Laplacian spectral radius among $2K_3$-free graphs of order $n\geq 44$. This case was later completed for all $n\geq 6$ by Zhang and Lei \cite{ZhangLei2K3}. Moreover, Zhang and Wang \cite{ZhangWang3K3} determined the extremal $3K_3$-free graphs for $n\geq 453$.

In this paper, we determine the unique extremal graph attaining the maximum signless Laplacian spectral radius among all $tK_3$-free graphs of order $n$ for each $t\geq 2$ and all sufficiently large $n$. The result is stated as follows.

\begin{theorem}\label{mainthm}
Let $t\geq 2$, and let $G$ be a $tK_3$-free graph of order $n\geq 28t-17$ with maximum signless Laplacian spectral radius. Then $G= K_{t-1}\vee K_{\floor{(n-t+1)/2},\ceil{(n-t+1)/2}}$.
\end{theorem}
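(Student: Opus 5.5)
We argue by induction on $t$, with the base case $t=2$ taken from Zhang and Lei \cite{ZhangLei2K3}; alternatively $t=1$ can serve as the base, since the triangle-free $q$-extremal graph is $K_{\lfloor n/2\rfloor,\lceil n/2\rceil}$ by the odd-cycle result \cite{YuanoddC}. Write $H=K_{t-1}\vee K_{\floor{(n-t+1)/2},\ceil{(n-t+1)/2}}$ and let $G$ be extremal. Then $G$ is connected and $q(G)\ge q(H)$. Since the maximum degree of $H$ is $n-1$, we get $q(H)\ge n$. A finer estimate, comparing with the quotient matrix of the equitable partition of $H$, gives roughly $q(H)\ge n+t-2+\frac{(n-t+1)(\ldots)}{\ldots}$. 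The only property needed is a lower bound of the form $q(G)> n+2t-c$ for an absolute constant $c$ once $n\ge 28t-17$.

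\textbf{Step 1: a vertex of huge degree.} Let $x$ be the Perron vector of $Q(G)$ and choose $u$ with $x_u=\max_v x_v$. The eigen-equation at $u$ gives
\[
(q-d_u)x_u=\sum_{v\sim u}x_v\le d_ux_u,
\]
so $d_u\ge q/2$. We need something much stronger, namely $d_u=n-1$. The tool is the standard bound $q(G)\le\max_{v}\bigl(d_v+m_v\bigr)$, where $m_v$ is the average degree of the neighbours of $v$, combined with an edge count. Since $G$ is $tK_3$-free, Erdős's theorem on disjoint triangles (or a direct greedy argument) gives $e(G)\le e(H)=\binom{t-1}{2}+(t-1)(n-t+1)+\lfloor (n-t+1)^2/4\rfloor$ for $n$ large. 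The plan is to show that if no vertex has degree $n-1$, then $q(G)<q(H)$.

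A cleaner route runs through triangles. Take a maximum family of vertex-disjoint triangles $T_1,\dots,T_s$ with $s\le t-1$, and let $S$ be the union of their vertices, so $|S|=3s\le 3t-3$. Then $G-S$ is triangle-free. Moreover, a vertex $w\in S$ that is adjacent to both ends of many disjoint edges of $G-S$ can be swapped in, so every triangle $T_i$ contains at most one vertex of large degree into $G-S$. These high-degree vertices form a set $W$ with $|W|\le t-1$. Every other vertex of $S$ has at most $O(t)$ neighbours in $G-S$ "forming triangles", so its contribution can be bounded.

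\textbf{Step 2: show $|W|=t-1$ and $W$ dominates.} Using $x$, we bound $q(G)$ from above by $|W|+\tfrac{n}{2}\cdot\ldots$. More concretely, we use the inequality
\[
q\sum_v x_v^2=\sum_{uv\in E}(x_u+x_v)^2\le \ldots
\]
or compare $G$ to $K_{|W|}\vee(\text{triangle-free on } n-|W|)$ plus a bounded number of extra edges. If $|W|\le t-2$, then $G-W$ has bounded triangle structure, and $q(G)\le q(K_{t-2}\vee K_{a,b})+O(t)/n$, which is strictly below $q(H)$ when $n\ge 28t-17$. This forces $|W|=t-1$. Next, the vertices of $W$ all have the maximal entries of $x$ up to an error of $O(t/n)$. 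If some $w\in W$ misses a vertex $v$, we add the edge $wv$ and delete the edges between $v$ and $S\setminus W$, keeping the graph $tK_3$-free. The Rayleigh quotient strictly increases, contradicting maximality. Hence $W$ is a clique joined to everything.

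\textbf{Step 3: finish.} Now $G-W$ must be triangle-free, since otherwise one triangle in $G-W$ together with $t-1$ triangles each using one vertex of $W$ and an edge of $G-W$ would give $tK_3$ (the required disjoint edges exist because $G-W$ is dense). The remaining task is to maximize $q(K_{t-1}\vee F)$ over triangle-free graphs $F$ on $n-t+1$ vertices. A bipartite $F$ is optimal: the Perron-vector argument shows that a non-bipartite triangle-free $F$ has few edges compared with $\lfloor (n-t+1)^2/4\rfloor$ (Andrásfai-type bound $e\le (n-t+1)^2/5$ plus $O(1)$), and this drops $q$. Among bipartite graphs, edge addition and balancing, via the quotient-matrix characteristic polynomial, give $K_{\floor{(n-t+1)/2},\ceil{(n-t+1)/2}}$.

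The main obstacle is Step 2, namely the quantitative estimates that force the high-degree set to have exactly $t-1$ vertices each of degree $n-1$ under the linear bound $n\ge 28t-17$. I would first try to obtain it via $q\le\max_{uv\in E}(d_u+d_v)$ applied to the edges inside the triangle-free part. This should yield the constant $28$ after careful bookkeeping.
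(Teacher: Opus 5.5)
Your proposal has a genuine gap at its core. Nothing in Steps 1--2 shows that an extremal $G$ has $t-1$ dominating vertices, and you say yourself that Step 2 is the obstacle.

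\textbf{Step 2.} The estimates you sketch there are asserted, not derived: $q(G)\le q(K_{t-2}\vee K_{a,b})+O(t)/n$, Perron entries on $W$ ``maximal up to $O(t/n)$'', and the switch that adds $wv$ and deletes the edges from $v$ to $S\setminus W$. In particular, the switch is shown neither to preserve $tK_3$-freeness nor to increase the Rayleigh quotient.

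The quantitative situation is also tighter than you assume. With $n$ of order $t$, the margin between $q(K_{t-1}\vee K_{\lfloor (n-t+1)/2\rfloor,\lceil (n-t+1)/2\rceil})$ and the competitor $q(K_{t-2}\vee K_{\lfloor (n-t+2)/2\rfloor,\lceil (n-t+2)/2\rceil})$ is $O(1)$ (about $3$ for large $t$ when $n\approx 28t$). So unquantified error terms cannot decide anything. Likewise, a lower bound $q(G)>n+2t-c$ cannot be ``the only property needed'', since for large $t$ that competitor already satisfies it. The induction on $t$ you announce is never used either.

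\textbf{The missing idea.} What you need is a local argument, not a global triangle packing. Take $u$ maximizing $d_u+\frac{1}{d_u}\sum_{v\sim u}d_v$. The bound $q\le\max_u\bigl(d_u+\frac{1}{d_u}\sum_{v\sim u}d_v\bigr)$ then gives $q(G)\le n+\frac{2e(G[N(u)])}{d_u}$. Suppose $G-u$ still contained $t-1$ disjoint triangles. Their $3(t-1)$ vertices would meet every edge of $G[N(u)]$, since an unmet edge together with $u$ would form a $t$-th triangle. The edge count for a graph with a vertex cover of size $3(t-1)$ then gives $\frac{2e(G[N(u)])}{d_u}\le 6(t-1)-\frac{3(t-1)(3t-2)}{n-1}$.

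The candidate graph exceeds $n+6(t-1)-\frac{3(t-1)(3t-2)}{n-1}$ once $n\ge 28t-17$; this comparison is where the constant $28$ comes from. Hence $G-u$ is $(t-1)K_3$-free, and maximality makes $u$ adjacent to every other vertex. The paper iterates this on $H_r$ in $G=K_r\vee H_r$, passing the threshold from $G$ to $H_r$ via a $2\times 2$ Rayleigh-quotient bound.

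\textbf{Step 3.} This step is also incorrect as written. Andr\'asfai's theorem is a minimum-degree statement. There is no bound $e\le N^2/5+O(1)$ for non-bipartite triangle-free graphs on $N$ vertices; they can have $\lfloor (N-1)^2/4\rfloor+1$ edges. In any case an edge count would not order $q$.

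The paper handles $G=K_{t-1}\vee F$ with $F$ triangle-free by Perron-vector switching. Take $w\in V(F)$ with the largest entry $x_w$, and replace $F$ by the complete bipartite graph with parts $N_F(w)$ and $V(F)\setminus N_F(w)$.
\begin{itemize}
\item Since $N_F(w)$ is independent, its vertices only gain neighbours.
\item Each other vertex $v$ of $F$ receives the neighbourhood of $w$. This changes $\sum_{u\sim v}(x_u+x_v)$ by $(q(G)-d_G(w))(x_w-x_v)\ge 0$.
\end{itemize}
Summing against $x$ gives $q(G')\ge x^{\top}Q(G')x\ge q(G)$, with strict inequality unless $F$ already is that complete bipartite graph. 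The case $N_F(w)=\varnothing$ is excluded separately. Only after this does balancing via the cubic characteristic polynomial finish the proof.
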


Throughout this paper, for a graph $G$, we write $V(G)$ and $E(G)$ for its vertex set and edge set, respectively, and write $e(G)=|E(G)|$. For $v\in V(G)$, let $d_G(v)$ and $N_G(v)$ denote its \textit{degree} and \textit{neighborhood}. Let $X$ and $Y$ be disjoint subsets of $V(G)$. We use $E(X,Y)$ to denote the set of edges between $X$ and $Y$, and $E(X)$ to denote the set of edges with both endpoints in $X$. Write $e(X,Y)=|E(X,Y)|$ and $e(X)=|E(X)|$. For two vertex-disjoint graphs $G$ and $H$, their \textit{union} is denoted by $G \cup H$, and their \textit{join} is denoted by $G\vee H$.  For a graph $F$ and a positive integer $t$, we use $tF$ to denote the union of $t$ vertex-disjoint copies of $F$. As usual, $K_n$ and $K_{m,n}$ denote the \textit{complete graph} of order $n$ and the \textit{complete bipartite graph} with partite sets of sizes $m$ and $n$, respectively. For other undefined notation and terminology, the reader is referred to \cite{BondyM}.

The remainder of this paper is organized as follows. In Section \ref{sec-pre}, we introduce some results that will be used in the proof. In Section \ref{sec-pf}, we prove Theorem \ref{mainthm}.

\section{Preliminaries}\label{sec-pre}

Let $M$ be a real symmetric matrix of order $n$, and let $V = \{1,2,\dots,n\}$. Given a partition $\Pi : V = V_1\cup\cdots\cup V_k$, then, according to the partition $\Pi$, the matrix $M$ can be written as
\[
M =\begin{bmatrix}
M_{1,1} & M_{1,2} & \cdots & M_{1,k} \\
M_{2,1} & M_{2,2} & \cdots & M_{2,k} \\
\vdots & \vdots & \ddots & \vdots \\
M_{k,1} & M_{k,2} & \cdots & M_{k,k}
\end{bmatrix}.
\]
The \textit{quotient matrix} of $M$ with respect to the partition $\Pi$ is the $k\times k$ matrix $B_\Pi = (b_{i,j})$, where each entry $b_{i,j}$ is the average of the row sums of the block $M_{i,j}$. If for all $i,j \in \{1,2,\dots,k\}$, all row sums of $M_{i,j}$ are equal to $b_{i,j}$, then $\Pi$ is called an \textit{equitable partition} of $M$, and the matrix $B_\Pi = (b_{i,j})_{k\times k}$ is called an \textit{equitable quotient matrix} of $M$.

\begin{lemma}[p. 30, \cite{BrouwerH}; p. 196--198, \cite{GodsilR}]\label{lemquotient}
Let $M$ be a real symmetric matrix and let $B_{\Pi}$ be an equitable quotient matrix of $M$. Then the eigenvalues of $B_{\Pi}$ are also eigenvalues of $M$. Furthermore, if $M$ is nonnegative and irreducible, then $\lambda(M) = \lambda(B_{\Pi})$, where $\lambda(M)$ and $\lambda(B_{\Pi})$ are the largest eigenvalues of $M$ and $B_{\Pi}$, respectively. 
\end{lemma}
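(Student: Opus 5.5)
The plan is to prove Lemma~\ref{lemquotient} through the characteristic matrix of the partition. Let $S$ be the $n\times k$ matrix whose $j$-th column is the indicator vector of $V_j$, so $S_{v,j}=1$ if $v\in V_j$ and $0$ otherwise. The columns of $S$ have disjoint supports and are nonzero, so $S$ has full column rank $k$.

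The first step is to translate equitability into a matrix identity. The entry $(MS)_{v,j}$ is the row sum of block $M_{i,j}$ in the row $v\in V_i$. Equitability says this equals $b_{i,j}=(SB_\Pi)_{v,j}$, hence
\[
MS=SB_\Pi .
\]

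The second step transfers eigenvalues from $B_\Pi$ to $M$. Suppose $B_\Pi x=\mu x$ with $x\neq 0$; here $\mu$ may be complex. Then $M(Sx)=SB_\Pi x=\mu Sx$, and $Sx\neq 0$ because $S$ has full column rank. So $\mu$ is an eigenvalue of $M$. Since $M$ is symmetric, its eigenvalues are real, and so every eigenvalue of $B_\Pi$ is real and lies in the spectrum of $M$. This proves the first assertion.

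The third step handles the case where $M$ is nonnegative and irreducible. From the second step, $\lambda(B_\Pi)\le\lambda(M)$. For the reverse inequality, let $z>0$ be the Perron vector of $M$, so $z^{\mathsf T}M=\lambda(M)z^{\mathsf T}$ by symmetry. Multiplying $MS=SB_\Pi$ on the left by $z^{\mathsf T}$ gives
\[
\lambda(M)\,z^{\mathsf T}S=z^{\mathsf T}SB_\Pi .
\]
The vector $y=S^{\mathsf T}z$ has entries $\sum_{v\in V_j}z_v>0$. Thus $y^{\mathsf T}B_\Pi=\lambda(M)y^{\mathsf T}$ with $y\neq 0$, so $\lambda(M)$ is an eigenvalue of $B_\Pi^{\mathsf T}$, and hence of $B_\Pi$. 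Combining the two inequalities gives $\lambda(M)=\lambda(B_\Pi)$.

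The only subtle point is the third step. Because $B_\Pi$ need not be symmetric, the reverse inequality cannot come from Rayleigh quotients. The left Perron vector supplies it instead, and its positivity is what guarantees $y\neq 0$.
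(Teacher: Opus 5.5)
Your proof is correct and complete. The paper gives no proof of this lemma and only cites Brouwer--Haemers and Godsil--Royle; your argument is the standard one behind those citations, resting on the identity $MS=SB_\Pi$ for the characteristic matrix $S$ of the partition. One small bonus of your left-eigenvector step: it only needs a nonzero nonnegative eigenvector $z$ of $M$ for $\lambda(M)$ to get $S^{\mathsf T}z\neq 0$, so it in fact gives $\lambda(M)=\lambda(B_\Pi)$ for every nonnegative symmetric $M$, with or without irreducibility.
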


\begin{lemma}[\cite{DasLap,FengYuLap}]\label{lemlocalbound}
Let $G$ be a connected graph.  Then
\begin{equation*}
q(G)\leq \max_{u\in V(G)}\left\{d_G(u)+\frac{1}{d_G(u)}\sum_{v\in N_G(u)}d_G(v)\right\}.
\end{equation*}
\end{lemma}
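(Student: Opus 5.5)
The inequality follows from the relation between $Q(G)$ and the line graph, combined with a weighted row-sum (similarity) bound on the Perron root of a nonnegative matrix. Let $G$ be connected with at least one edge; if $G=K_1$ the right-hand side is undefined, so we assume $n\ge 2$ and hence $d_G(u)\ge 1$ for every $u$. Then $Q(G)$ is nonnegative, symmetric and irreducible, so $q(G)$ is its Perron root.

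The cleanest route uses the diagonal similarity $D^{-1}QD$ with $D=\mathrm{diag}(d_G(u))$. This matrix has the same spectrum as $Q$ and is still nonnegative. Its $u$-th row sum is
\[
\frac{1}{d_G(u)}\Bigl(d_G(u)\cdot d_G(u)+\sum_{v\in N_G(u)}d_G(v)\Bigr)=d_G(u)+\frac{1}{d_G(u)}\sum_{v\in N_G(u)}d_G(v).
\]
The spectral radius of a nonnegative matrix is at most its maximum row sum. To see this, let $x>0$ be a Perron vector of the similar matrix, and take $u$ with $x_u$ maximal. Comparing the $u$-th coordinate of the eigen-equation gives $\lambda x_u=\sum_w b_{uw}x_w\le x_u\sum_w b_{uw}$. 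Since $D^{-1}QD$ and $Q$ share their eigenvalues, its spectral radius is $q(G)$, and the bound follows immediately.

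An equivalent direct argument avoids similarity. Let $x>0$ be a Perron eigenvector of $Q$, and set $y_u=x_u/d_G(u)$. Choose $u$ maximizing $y_u$. From $q x_u=d_G(u)x_u+\sum_{v\sim u}x_v$ we get
\[
q\,d_G(u)y_u=d_G(u)^2y_u+\sum_{v\sim u}d_G(v)y_v\le y_u\Bigl(d_G(u)^2+\sum_{v\sim u}d_G(v)\Bigr).
\]
Dividing by $d_G(u)y_u>0$ gives the claim.

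There is no real obstacle here. The only points needing care are positivity of the Perron vector, which follows from irreducibility, that is, connectedness, and the fact that all degrees are nonzero, so that both the division and the similarity are legitimate.
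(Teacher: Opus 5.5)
Your proof is correct. The paper gives no proof of this lemma; it quotes it from Das and from Feng--Yu, so there is no argument in the paper to compare yours against. Your diagonal-similarity/row-sum argument is the standard proof of this bound.

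The $u$-th row sum of $D^{-1}QD$ is exactly $d_G(u)+\frac{1}{d_G(u)}\sum_{v\in N_G(u)}d_G(v)$. The maximal-coordinate argument on the positive Perron vector then bounds $q(G)$ by the largest row sum.

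Your ``direct'' version is not an independent second proof. It is the same argument written in coordinates, since $y=D^{-1}x$ is exactly the Perron vector of $D^{-1}QD$.

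The opening reference to the line graph is never used and can be dropped.
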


\begin{lemma}[\cite{CvetkovicRS}]\label{lemmonotone}
Let $H$ be a proper subgraph of a connected graph $G$.  Then $q(H)<q(G)$.
\end{lemma}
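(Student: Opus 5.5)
Since $G$ is connected, $Q(G)=D(G)+A(G)$ is a nonnegative irreducible symmetric matrix, and I will run the argument through Perron--Frobenius and the Rayleigh quotient. First I note that it suffices to treat the case where $H$ is a spanning subgraph of $G$. If $H$ misses some vertices, pad it with those vertices as isolated vertices to obtain a spanning subgraph $H'$ of $G$. Then $Q(H')$ is $Q(H)$ direct-summed with a zero block, so $q(H')=q(H)$, because $Q(H)$ is positive semidefinite and so $q(H)\ge 0$. Moreover $H'$ is still a proper subgraph of $G$ whenever $H$ is, because $G$ is connected with at least two vertices: any vertex missing from $H$ has an incident edge in $G$, and that edge is missing from $H'$. (If $G$ is a single vertex, it has no proper subgraph with a vertex, so the statement is vacuous.) Hence I may assume $V(H)=V(G)$ and $E(H)\subsetneq E(G)$.

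Take a unit nonnegative eigenvector $x$ of $Q(H)$ for $q(H)$; it exists by Perron--Frobenius, since $Q(H)$ is nonnegative. I will use the quadratic form
\[
x^{\T}Q(F)x=\sum_{uv\in E(F)}(x_u+x_v)^2 .
\]
Because every term is nonnegative and $E(H)\subseteq E(G)$, this gives $q(H)=x^{\T}Q(H)x\le x^{\T}Q(G)x\le q(G)$.

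It remains to rule out equality. Suppose $q(H)=q(G)$. Then $x$ attains the maximum of the Rayleigh quotient of $Q(G)$, so $x$ is an eigenvector of $Q(G)$ for $q(G)$. Since $Q(G)$ is irreducible and nonnegative, Perron--Frobenius forces this eigenvector to be strictly positive. Now pick an edge $uv\in E(G)\setminus E(H)$. Then
\[
x^{\T}Q(G)x-x^{\T}Q(H)x\ge (x_u+x_v)^2>0,
\]
which contradicts the equality chain above. Therefore $q(H)<q(G)$.

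The only delicate point is the equality step: a nonnegative vector that maximizes the Rayleigh quotient of an irreducible nonnegative symmetric matrix must be a Perron vector, and hence strictly positive. This follows from the standard facts that the maximizers of the Rayleigh quotient are exactly the top eigenvectors, and that for an irreducible nonnegative matrix the top eigenspace is spanned by a single positive vector.
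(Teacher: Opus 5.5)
Your proof is correct. The paper does not prove this lemma; it quotes it from \cite{CvetkovicRS}. The usual justification there is the Perron--Frobenius comparison theorem. After padding $H$ with isolated vertices, one has $0\le Q(H)\le Q(G)$ entrywise with $Q(H)\neq Q(G)$, and a nonnegative matrix that is entrywise dominated by, but different from, an irreducible nonnegative matrix has strictly smaller spectral radius.

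Your argument proves this monotonicity from scratch in the symmetric setting. The identity $x^{\T}Q(F)x=\sum_{uv\in E(F)}(x_u+x_v)^2$ gives the weak inequality at once. Equality is ruled out because a nonnegative maximizer of the Rayleigh quotient of the irreducible matrix $Q(G)$ must be the positive Perron vector, so a missing edge contributes $(x_u+x_v)^2>0$. Your reduction to a spanning subgraph is also handled correctly:
\begin{itemize}
\item the zero block leaves $q$ unchanged, since $Q(H)$ is positive semidefinite and so $q(H)\ge 0$;
\item properness survives, because a vertex missing from $H$ has an incident edge in the connected graph $G$.
\end{itemize}

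The comparison-theorem route is a one-line citation and works for nonsymmetric nonnegative matrices as well. Yours needs only Rayleigh's principle and positivity of the Perron vector. It is the same quadratic-form bookkeeping the paper uses in the proof of Lemma~\ref{lem2forthm}, where $\sum_{v}x_vS_G(v)=\sum_{uv\in E(G)}(x_u+x_v)^2$.
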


\begin{lemma}\label{lemjoin}
Let $t> r\geq 1$.  If $H$ is a $(t-r)K_3$-free graph, then $K_r\vee H$ is $tK_3$-free.
\end{lemma}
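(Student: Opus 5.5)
We argue by contradiction, using a pigeonhole count on the triangles of $K_r\vee H$. Suppose $K_r\vee H$ contains $t$ pairwise vertex-disjoint triangles $T_1,\dots,T_t$, and write $S=V(K_r)$, so $|S|=r$.

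First, since the triangles are vertex-disjoint, each vertex of $S$ lies in at most one of them. Hence at most $r$ of the $T_i$ meet $S$.

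Second, the remaining triangles, at least $t-r\geq 1$ of them, have all three vertices in $V(H)$. Every edge of $K_r\vee H$ with both ends in $V(H)$ is an edge of $H$, because the join only adds edges inside $S$ and between $S$ and $V(H)$. So each of these triangles is a triangle of $H$.

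Together they give $t-r$ vertex-disjoint triangles in $H$, contradicting the assumption that $H$ is $(t-r)K_3$-free. Therefore $K_r\vee H$ is $tK_3$-free.

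The argument is elementary. The only point needing care is the observation that the join creates no new edges inside $V(H)$, so a triangle avoiding $S$ really is a triangle of $H$.
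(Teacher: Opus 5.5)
Your proof is correct and matches the paper's argument: at most $r$ of the $t$ disjoint triangles can meet $V(K_r)$, so at least $t-r$ of them lie entirely in $H$, which is a contradiction. Your remark that the join adds no edges inside $V(H)$ spells out a point the paper leaves implicit.
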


\begin{proof}
Suppose that $K_r\vee H$ contains $t$ vertex-disjoint triangles. Since $K_r$ has exactly $r$ vertices, there are at most $r$ triangles among these $t$ vertex-disjoint triangles that contain a vertex of $K_r$. Hence at least $t-r$ triangles of the $t$ vertex-disjoint triangles are contained entirely in $H$, which contradicts the assumption that $H$ is $(t-r)K_3$-free.
\end{proof}

Recall that a \textit{vertex cover} of a graph is a set of vertices that meets every edge.
\begin{lemma}\label{lemcover}
Let $\ell\geq 1$, and let $G$ be a graph of order $n$. If $G$ has a vertex cover of size at most $\ell$, then
\[
e(G)\leq
\begin{cases}
\displaystyle \binom{n}{2}, & n\leq \ell+1,\\[2mm]
\displaystyle \ell n-\frac{\ell(\ell+1)}{2}, & n> \ell+1.
\end{cases}
\]
\end{lemma}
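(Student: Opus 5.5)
We need to prove Lemma \ref{lemcover}: if $G$ has a vertex cover of size at most $\ell$, then $e(G)\le \binom{n}{2}$ when $n\le \ell+1$, and $e(G)\le \ell n-\ell(\ell+1)/2$ when $n>\ell+1$.

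The first case needs no argument, since every simple graph of order $n$ has at most $\binom{n}{2}$ edges.

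For $n>\ell+1$, the plan is to fix a vertex cover $S$ with $|S|=s\le \ell$. Because $S$ meets every edge, each edge lies either inside $S$ or between $S$ and $V(G)\setminus S$. Counting these two types separately gives
\[
e(G)=e(S)+e(S,V(G)\setminus S)\le \binom{s}{2}+s(n-s)=sn-\frac{s(s+1)}{2}.
\]

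It remains to compare this with the claimed bound. Set $f(x)=xn-x(x+1)/2$. Then
\[
f(x+1)-f(x)=n-(x+1),
\]
which is positive whenever $x+1<n$. For integers $0\le s<\ell$ we have $s+1\le \ell<n$, so $f$ is strictly increasing on $\{0,1,\dots,\ell\}$. Hence $f(s)\le f(\ell)=\ell n-\ell(\ell+1)/2$, which is the desired inequality.

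The only point needing care is this monotonicity step, and it is exactly where the hypothesis $n>\ell+1$ is used. When $n\le \ell+1$, the count $f(\ell)$ can exceed $\binom{n}{2}$, and the trivial bound from the first case applies instead. Beyond this, the argument is routine.
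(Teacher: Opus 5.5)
Your proof is correct and follows the paper's argument: split the edges into those inside the cover and those leaving it to get $e(G)\le sn-\frac{s(s+1)}{2}$, then maximize over $s\le\ell$, which you justify more explicitly than the paper via $f(x+1)-f(x)=n-(x+1)$. One small correction to your closing remark: $\binom{n}{2}-f(\ell)=\frac{1}{2}(\ell-n)(\ell-n+1)\ge 0$ always holds. So for $n\le\ell+1$ the problem is not that $f(\ell)$ can exceed $\binom{n}{2}$, but that it can fall below it and fail to be a valid bound (e.g.\ $K_n$ with $n\le\ell-1$), which is why the trivial bound is used there.
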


\begin{proof}
Let $X$ be a vertex cover of $G$ with $|X|\leq \ell$. Since $X$ is a vertex cover of $G$, every edge of $G$ is incident with a vertex in $X$. If $uv$ is an edge of $G$, then we have $uv \in E(X)$ or $uv \in E(X,V(G)\setminus X)$. Thus we have
\[
e(G)\leq \binom{|X|}{2}+|X|(n-|X|).
\]
For $n\leq \ell+1$, it is trivial that $e(G)\leq \binom{n}{2}$. For $n> \ell+1$, $\binom{|X|}{2}+|X|(n-|X|)$ attains its maximum when $|X|=\ell$, and thus $e(G)\leq \ell n-\frac{\ell(\ell+1)}{2}$. This completes the proof.
\end{proof}

\section{Proof of Theorem \ref{mainthm}}\label{sec-pf}

Throughout this section, for an integer $t\geq 1$ and all $x > 1$, we define
\begin{equation}\label{eqdefR}
R_t(x)=6t-\frac{3t(3t+1)}{x-1}.
\end{equation}
For positive integers $m$ and $n$, we use $J_{m\times n}$ to denote the $m\times n$ all-ones matrix, and use $\bm{1}_m$ to denote the all-ones vector of size $m$.

Before giving the proof of Theorem \ref{mainthm}, we first establish two important lemmas.

\begin{lemma}\label{lem1forthm}
Let $t\geq 1$, and let $G$ be a $(t+1)K_3$-free graph of order $n\geq 3t+2$.  If $q(G)>n+R_{t}(n)$, then there exists a vertex $v\in V(G)$ such that $G-\{v\}$ is $tK_3$-free.
\end{lemma}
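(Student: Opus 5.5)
The plan is to argue by contradiction. Suppose that for every $v\in V(G)$ the graph $G-\{v\}$ still contains $t$ vertex-disjoint triangles. Since $G$ is $(t+1)K_3$-free, any maximum family $\mathcal T$ of vertex-disjoint triangles of $G$ has exactly $t$ members. Let $S=\bigcup_{T\in\mathcal T}V(T)$, so $|S|=3t$ and $G-S$ is triangle-free. The assumption says that every vertex $v\in S$ can be ``avoided'': there is another family of $t$ disjoint triangles missing $v$. The aim is to convert this rigidity into an upper bound on $q(G)$ of the form $q(G)\le n+R_t(n)$, contradicting the hypothesis.

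The first step is to extract degree information from the spectral hypothesis. I would use the edge form of Lemma~\ref{lemlocalbound}. Summing $d(u)+\frac1{d(u)}\sum_{v\in N(u)}d(v)$ suitably, or using the standard consequence $q(G)\le\max_{uv\in E(G)}\{d(u)+d(v)\}$, gives an edge $uv$ with $d(u)+d(v)>n+R_t(n)$. Hence
\[
|N(u)\cap N(v)|\ \ge\ d(u)+d(v)-n\ >\ R_t(n)\ \ge\ 3t,
\]
where the last inequality uses $n\ge 3t+2$. So $uv$ lies in at least $3t+1$ triangles.

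The second step is the combinatorial core. Apply the assumption to $u$: $G-\{u\}$ has $t$ disjoint triangles on a $3t$-set $S_u$. If $v\notin S_u$, some common neighbour $w\notin S_u$ of $u$ and $v$ exists, since there are more than $3t$ of them. Then $uvw$ together with the $t$ triangles gives $(t+1)K_3$, a contradiction. Symmetrically, the same holds if $u\notin S_v$. So in every avoiding family for $u$ the vertex $v$ is used, and vice versa.

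To finish, I would pick the triangle $vab$ of the family in $G-\{u\}$ and swap it out. The remaining $t-1$ triangles occupy $3t-3$ vertices. Hence $u,v$ have at least $4$ common neighbours outside them, and we would like to build two disjoint triangles from $\{u,v,a,b\}$ plus common neighbours. One is $uvw$, and the other should use $a,b$ with some vertex. This requires $a,b$ to have a common neighbour outside, which is where more degree information is needed. Here I would iterate Lemma~\ref{lemlocalbound}-type counting on the vertices of $S$, or alternatively use the known bound $q(G)\le \frac{2e(G)}{n-1}+n-2$. Note that the constant $R_t$ is calibrated exactly so that $q(G)>n+R_t(n)$ is equivalent to
\[
e(G)>(3t+1)n-\tfrac{(3t+1)(3t+2)}{2},
\]
which by Lemma~\ref{lemcover} rules out a vertex cover of size $3t+1$. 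I would try to show that if no single vertex hits all $tK_3$'s, then $G$ has a vertex cover of size at most $3t+1$ up to a triangle-free part whose contribution can be absorbed.

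The main obstacle is this last step. The triangle-free part $G-S$ may be dense, so the edge count alone does not obviously yield a small vertex cover. I expect the real work to be showing that the edges of $G-S$ and the edges from $S$ to $G-S$ cannot simultaneously be numerous when every vertex of $S$ is avoidable. The first thing I would try is to classify vertices of $S$ by whether they have at least $3t+1$ neighbours outside $S$, and show that avoidability forces at most $3t+1$ such ``heavy'' vertices to carry almost all edges.
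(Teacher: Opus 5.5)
Your proposal does not prove the lemma. The swap at the end does not close, as you say yourself, and neither of your suggested remedies can close it: the information you keep from $q(G)>n+R_t(n)$ is too weak.

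From $q(G)\le\max_{uv\in E(G)}\{d(u)+d(v)\}$ you retain only an edge $uv$ with more than $3t$ common neighbours. Together with $(t+1)K_3$-freeness, this does not force a vertex meeting every copy of $tK_3$. Take $K_{2t+1}\vee (n-2t-1)K_1$:
\begin{itemize}
\item every triangle uses two vertices of the clique, so the graph is $(t+1)K_3$-free;
\item for $n\ge 3t+2$, every $G-\{v\}$ still contains $tK_3$;
\item for $n\ge 6t+2$, every clique edge has $d(u)+d(v)=2n-2>n+R_t(n)$.
\end{itemize}
The route through $q(G)\le\frac{2e(G)}{n-1}+n-2$ fails the same way. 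It only gives $e(G)>(3t+1)n-\frac{(3t+1)(3t+2)}{2}$, which is an implication, not an equivalence. For large $n$, the graph $K_{3t+2}\cup K_{\floor{(n-3t-2)/2},\ceil{(n-3t-2)/2}}$ satisfies this bound and is $(t+1)K_3$-free, yet no vertex meets all its copies of $tK_3$. So no combinatorics after your first step can finish; the spectral hypothesis has to be used locally.

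The missing idea is to bound $q$ through one vertex and the edges inside its neighbourhood, which is what the paper does.
\begin{itemize}
\item Pass to a component $C$ of order $m$ with $q(C)=q(G)$. Here $m\ge 3t+2$, since otherwise $q(C)\le 2m-2\le 6t$. Also $q(C)>m+R_t(m)$, because $x+R_t(x)$ is increasing.
\item Take $u$ maximizing $d(u)+\frac{1}{d(u)}\sum_{v\in N(u)}d(v)$ in Lemma~\ref{lemlocalbound}. Write $\sum_{v\in N(u)}d(v)=d(u)+2e(C[N(u)])+e(N(u),W)$, where $W=V(C)\setminus(N(u)\cup\{u\})$ and $e(N(u),W)\le d(u)(m-1-d(u))$. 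This gives $q(C)\le m+\frac{2e(C[N(u)])}{d(u)}$.
\item Now apply your step-two observation to edges inside $N(u)$, not to common neighbours of a fixed edge. If $G-\{u\}$ had $t$ disjoint triangles on a $3t$-set $S$, every edge of $C[N(u)]$ would have to meet $S$. Otherwise $u$ and that edge form a $(t+1)$st triangle.
\item Hence $S\cap N(u)$ is a vertex cover of $C[N(u)]$ of size at most $3t$. Lemma~\ref{lemcover}, with $d(u)\le m-1$ and $m\ge 3t+2$, gives $\frac{2e(C[N(u)])}{d(u)}\le R_t(m)$, a contradiction.
\end{itemize}
This shows directly that $G-\{u\}$ is $tK_3$-free for this particular $u$. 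No global ``every vertex is avoidable'' hypothesis and no swapping are needed.

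It also shows the true calibration of $R_t$. The quantity $\frac{(n-1)R_t(n)}{2}=3t(n-1)-\frac{3t(3t+1)}{2}$ is the maximum number of edges of an $(n-1)$-vertex graph with a vertex cover of size $3t$, that is, of the neighbourhood of a dominating vertex.
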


\begin{proof}
Clearly, there exists a connected component $C$ of $G$ with the same signless Laplacian spectral radius, that is, $q(C)=q(G)$. Let $m=|V(C)|$. If $m\leq 3t+1$, then all row sums of $Q(C)$ are at most $2m-2\leq 6t$, and hence $q(C)\leq 6t$.  On the other hand, it is easy to check that $g_t(x)=x+R_t(x)$ is strictly increasing for $x> 1$ and $g_t(3t+2)=6t+2$. Then by $q(G)>n+R_{t}(n)$, we have $q(C)=q(G)>g_t(n)\geq 6t+2$, a contradiction. Thus $m \geq 3t+2$. Since $g_t(x)$ is strictly increasing and $n\geq m$, we have
\begin{equation}\label{eqlem1qdege}
q(C)>m+R_t(m).
\end{equation}
Since $C$ is connected and $m\geq 3t+2\geq 5$, every vertex of $C$ has positive degree. Choose $u\in V(C)$ such that $d_C(u)+\frac{1}{d_C(u)}\sum_{v\in N_C(u)}d_C(v)$ is maximum.  By Lemma \ref{lemlocalbound},
\begin{equation}\label{eqlem1local}
q(C)\leq d_C(u)+\frac{1}{d_C(u)}\sum_{v\in N_C(u)}d_C(v).
\end{equation}
Let $W=V(C)\setminus (N_C(u)\cup\{u\})$. By \eqref{eqlem1local}, we have
\begin{align}
q(C)&\leq d_C(u)+1+\frac{2e(C[N_C(u)])+e(N_C(u),W)}{d_C(u)} \notag\\
&\leq d_C(u)+1+\frac{2e(C[N_C(u)])+d_C(u)(m-1-d_C(u))}{d_C(u)} \notag\\
&=m+\frac{2e(C[N_C(u)])}{d_C(u)}. \label{eqlem1qleq1}
\end{align}

We now prove that $G-\{u\}$ is $tK_3$-free.  Suppose to the contrary that $G-\{u\}$ contains $t$ vertex-disjoint triangles. Let $S$ be the union of the vertex sets of $t$ vertex-disjoint triangles in $G-\{u\}$.  Then $|S|=3t$.  Since $G$ is $(t+1)K_3$-free, if there is an edge $v_1v_2 \in E(C[N_C(u)])$ such that $v_1,v_2\notin S$, then $\{u,v_1,v_2\}$ would form one more triangle disjoint from the $t$ triangles in $S$, a contradiction. Thus $S \cap N_C(u)$ is a vertex cover of $C[N_C(u)]$ with size at most $\ell=3t$. By Lemma \ref{lemcover}, if $d_C(u)\leq \ell+1$, then
\[
\frac{2e(C[N_C(u)])}{d_C(u)}\leq d_C(u)-1\leq \ell\leq R_t(m),
\]
where the last inequality follows from $m\geq 3t+2 = \ell+2$.  If $d_C(u)\geq \ell+2$, then
\[
e(C[N_C(u)])\leq \ell d_C(u)-\frac{\ell(\ell+1)}{2},
\]
which together with $d_C(u)\leq m-1$ yields
\[
\frac{2e(C[N_C(u)])}{d_C(u)}\leq 2\ell-\frac{\ell(\ell+1)}{d_C(u)}\leq 2\ell-\frac{\ell(\ell+1)}{m-1}=R_t(m).
\]
In both cases, using \eqref{eqlem1qleq1} gives $q(C)\leq m+R_t(m)$, which contradicts \eqref{eqlem1qdege}.  Therefore $G-\{u\}$ is $tK_3$-free. This completes the proof.
\end{proof}

\begin{lemma}\label{lem2forthm}
Let $s\geq 1$ and $G=K_s\vee H$, where $H$ is a $K_3$-free graph with $|V(H)|\geq 1$. Let $\bm{x}=(x_v)_{v\in V(G)}$ be a positive unit Perron vector of $Q(G)$, and choose $w\in V(H)$ such that $x_w=\max\{x_v:v\in V(H)\}$. Let $H'$ be the complete bipartite graph with parts $N_H(w)$ and $V(H)\setminus N_H(w)$, and let $G'=K_s\vee H'$. Then $q(G')\geq q(G)$. Moreover, if $N_H(w)\neq\varnothing$ and $H\neq H'$, then $q(G')>q(G)$.
\end{lemma}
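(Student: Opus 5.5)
The plan is to compare the two Rayleigh quotients using the fixed Perron vector $\bm{x}$ of $Q(G)$:
\[
q(G)=\bm{x}^{\T}Q(G)\bm{x}=\sum_{uv\in E(G)}(x_u+x_v)^2, \qquad q(G')\geq \bm{x}^{\T}Q(G')\bm{x}.
\]
Write $A=N_H(w)$, $B=V(H)\setminus A$ and $S=\sum_{z\in V(K_s)}x_z$. The edges inside $K_s$ and between $K_s$ and $V(H)$ are the same in $G$ and $G'$, so they contribute equally. It therefore suffices to show
\[
\sum_{uv\in E(H)}(x_u+x_v)^2\leq \sum_{u\in A,\,v\in B}(x_u+x_v)^2 .
\]
For the strict part, I will use that if equality held throughout, then $\bm{x}$ would be a Rayleigh maximiser for $Q(G')$. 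Hence $\bm{x}$ would be an eigenvector of $Q(G')$ for $q(G)$, and comparing the eigen-equations of $Q(G)$ and $Q(G')$ at a vertex whose neighbourhood changed would give a contradiction. Such a vertex exists when $H\neq H'$, and the positivity of $\bm{x}$ is used there. The hypothesis $N_H(w)\neq\varnothing$ is needed so that $G'$ is connected and $q(G')$ is attained only by positive vectors.

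\textbf{Step 1: structure of $E(H)$.} Since $H$ is $K_3$-free, $A=N_H(w)$ is independent. Hence every edge of $H$ either joins $A$ to $B$ or lies inside $B$. Edges between $A$ and $B$ already appear in $H'$, so the real task is to pay for the edges inside $B$ with the missing $A$--$B$ edges.

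\textbf{Step 2: vertex-wise splitting and the eigen-equations.} I will use the identity
\[
\sum_{uv\in E(F)}(x_u+x_v)^2=\sum_{v}x_v\sum_{u\in N_F(v)}(x_u+x_v),
\]
valid for any graph $F$. The key observation comes from the eigen-equation $(q-s-d_H(v))x_v=S+\sum_{u\in N_H(v)}x_u$ with $q=q(G)$, which gives
\[
f(v):=\sum_{u\in N_H(v)}(x_u+x_v)=(q-s)x_v-S .
\]
This quantity does not depend on $d_H(v)$. Thus the $H$-side equals $\sum_{v\in V(H)}x_v\big((q-s)x_v-S\big)$. On the $H'$-side, vertices $v\in A$ receive $\sum_{u\in B}(x_u+x_v)\geq f(v)$, because $N_H(v)\subseteq B$. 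Also $w$ satisfies $\sum_{u\in A}(x_u+x_w)=f(w)$ exactly, and $f(w)\geq f(v)$ for all $v\in V(H)$ by the maximality of $x_w$.

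\textbf{Step 3 (main obstacle).} The difficulty is the vertices $v\in B$. Their $H'$-contribution is $x_v\big(\sum_{u\in A}x_u+|A|x_v\big)$. This is not obviously at least $x_v f(v)$, since replacing $x_w$ by the smaller $x_v$ loses $|A|(x_w-x_v)$.

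My first attempt will be to pool the terms rather than compare vertex by vertex. Each edge inside $B$ is counted for two $B$-vertices on the $H$-side. On the $A$-side of $H'$, each $v\in A$ gains an excess $\sum_{u\in B\setminus N_H(v)}(x_u+x_v)$ over $f(v)$. I will try to charge the deficit of each $v\in B$ against this excess together with the slack $f(w)-f(v)=(q-s)(x_w-x_v)$.

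If the pooling does not close, the fallback is different. Since the Rayleigh quotient is symmetric in $A$ and $B$, I will redo the computation using the eigen-equation at $w$ to rewrite $\sum_{u\in A}x_u=(q-s-|A|)x_w-S$. I will then check the inequality directly in the form $\sum_{v\in B}x_v\big[(q-s-|A|)(x_w-x_v)\big]\geq \cdots$. For this I will use $q-s\geq |A|+|B|$, which follows because $q(G)$ is at least $q$ of the spanning subgraph $K_s\vee \overline{K}_{|V(H)|}$.
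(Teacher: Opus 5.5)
Your argument for $q(G')\geq q(G)$ is essentially the paper's. Step 3 is not a real obstacle: you already have both the loss $|A|(x_w-x_v)$ and the slack $f(w)-f(v)$. For each $v\in B$ they combine to
\[
\sum_{u\in A}(x_u+x_v)=f(w)-|A|(x_w-x_v)=f(v)+(q-s-|A|)(x_w-x_v)\geq f(v),
\]
because $q>d_G(w)=s+|A|$. This follows from the eigen-equation at $w$, using $\bm{x}>0$ and $s\geq 1$. So the comparison closes vertex by vertex and no pooling is needed. The right-hand side in your fallback is simply $0$, which is exactly how the paper handles $B\setminus\{w\}$.

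The gap is in the strict part. Comparing the eigen-equations of $Q(G)$ and $Q(G')$ at an arbitrary vertex whose neighbourhood changed gives no contradiction. A vertex $v\in B$ can lose neighbours inside $B$ and gain neighbours in $A$. Then $(Q(G')\bm{x})_v=(Q(G)\bm{x})_v$ only says
\[
\sum_{u\in A\setminus N_H(v)}(x_u+x_v)=\sum_{u\in N_H(v)\cap B}(x_u+x_v),
\]
and positivity does not forbid this. You need a vertex whose neighbourhood only grows. The natural candidates are the vertices of $A$, which lose nothing since $A$ is independent.

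The missing step is to show that some $v\in A$ really gains a neighbour. If none does, every $A$--$B$ pair is already an edge of $H$. Then $H\neq H'$ forces an edge $v_1v_2$ inside $B$, and any $u\in A$ yields a triangle $uv_1v_2$. This is exactly where $K_3$-freeness and $N_H(w)\neq\varnothing$ are used.

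Your stated reason for that hypothesis is wrong. $G'=K_s\vee H'$ is connected for every $H'$ because $s\geq 1$. You also never need a positive Perron vector of $Q(G')$, since $\bm{x}$ itself is positive.

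Once such a $v\in A$ is found, the eigenvector detour is unnecessary. Since $x_v>0$, the vertex-wise inequality at $v$ is strict, so $\bm{x}^{\T}Q(G')\bm{x}>q(G)$ directly; this is the paper's route.
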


\begin{proof}
For each $v\in V(G)$, let
\[
S_G(v)=\sum_{uv\in E(G)}(x_u+x_v),\qquad
S_{G'}(v)=\sum_{uv\in E(G')}(x_u+x_v).
\]
Since $\bm{x}$ is the Perron vector of $Q(G)$, we have $Q(G)\bm{x}=q(G)\bm{x}$. Then for all $v\in V(G)$,
\begin{equation}\label{eqlem2Seigen}
S_G(v)=q(G)x_v.
\end{equation}
In particular, $q(G)x_w=d_G(w)x_w+\sum_{u\in N_G(w)}x_u$. Since $x_v>0$ for all $v\in V(G)$ and $N_{G}(w)\neq \varnothing$, we have $q(G)>d_G(w)$.

For every $v^{(1)}\in N_H(w)$, since $H$ is $K_3$-free, $E(N_H(w))=\varnothing$. Hence the graph $G'$ only adds possible new neighbors of $v^{(1)}$ in $V(H)\setminus (N_H(w)\cup \{w\})$. It follows that $N_G(v^{(1)}) \subseteq N_{G'}(v^{(1)})$, and thus
\begin{equation}\label{eqlem2v1}
S_{G'}(v^{(1)})\geq S_G(v^{(1)}).
\end{equation}
For every $v^{(2)}\in V(K_s)\cup\{w\}$, we have $N_G(v^{(2)}) = N_{G'}(v^{(2)})$. Thus
\begin{equation}\label{eqlem2v2}
S_{G'}(v^{(2)})=S_G(v^{(2)}).
\end{equation}
For every $v^{(3)}\in V(H)\setminus (N_H(w)\cup \{w\})$, we have $N_{G'}(v^{(3)})=V(K_s)\cup N_H(w)$.  Thus
\begin{align*}
S_{G'}(v^{(3)})&=d_G(w)x_{v^{(3)}}+S_G(w)-d_G(w)x_w \\
&=S_G(v^{(3)})+(q(G)-d_G(w))(x_w-x_{v^{(3)}}),
\end{align*}
where the last equality follows from \eqref{eqlem2Seigen}.  Since $x_w\geq x_{v^{(3)}}$ and $q(G)>d_G(w)$, we have
\begin{equation}\label{eqlem2v3}
S_{G'}(v^{(3)})\geq S_G(v^{(3)}).
\end{equation}
Combining \eqref{eqlem2v1}, \eqref{eqlem2v2} and \eqref{eqlem2v3} gives
\begin{equation*}
q(G')\geq \bm{x}^{\T}Q(G')\bm{x}
=\sum_{v\in V(G)}x_vS_{G'}(v)
\geq \sum_{v\in V(G)}x_vS_G(v)=q(G).
\end{equation*}

Now we assume that $N_H(w)\neq \varnothing$ and $H\neq H'$. If $G'$ contains an edge between $N_H(w)$ and $V(H)\setminus (N_H(w)\cup \{w\})$ that is not in $G$, then $S_{G'}(v^{(1)})> S_G(v^{(1)})$ for at least one vertex $v^{(1)}\in N_H(w)$, and we have $q(G')>q(G)$.  If $G'$ contains no such edge, then $H$ contains all edges between $N_H(w)$ and $V(H)\setminus (N_H(w)\cup \{w\})$. By $H\neq H'$, $H$ has an edge $v_1v_2 \in E\left(V(H)\setminus (N_H(w)\cup \{w\})\right)$.  Since $N_H(w)\neq \varnothing$, there exists a vertex $u\in N_H(w)$ adjacent to $v_1$ and $v_2$, which implies that $\{u,v_1,v_2\}$ forms a triangle in $H$, a contradiction. Therefore, if $N_H(w)\neq \varnothing$ and $H\ne H'$, then $q(G')>q(G)$.
\end{proof}

We now prove Theorem \ref{mainthm}.

\begin{proof}[{\bf Proof of Theorem \ref{mainthm}}]
For an integer $s\geq 2$ and all $x>s$, let \begin{equation}\label{eqthmdefLsn}
L_s(x)=x+\frac{s-2}{2}+\sqrt{s(x-s)+\frac{(s-2)^2}{4}}-\frac{1}{x-s}.
\end{equation}
For integers $s\geq 2$ and $1\leq h\leq s-1$, and all $x\geq s+2h+2$, let
\begin{align}\label{eqthmdefU}
    U_{s,h}(x)&=x+\frac{s-h-2+R_h(x-s+h)}{2} \notag\\
&\quad +\sqrt{(s-h)(x-s+h)+\frac{(s-h-2-R_h(x-s+h))^2}{4}}.
\end{align}
To complete the proof, we first show the following two claims.

\begin{claim}\label{clm1}
Let $s\geq 1$, $n\geq 28s+11$. Then $q\left(K_s\vee K_{\floor{(n-s)/2},\ceil{(n-s)/2}}\right)>n+R_s(n)$. Moreover, if $s\geq 2$, then $q\left(K_s\vee K_{\floor{(n-s)/2},\ceil{(n-s)/2}}\right)\geq L_s(n)>n+R_s(n)$.
\end{claim}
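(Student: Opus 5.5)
Let $G_s=K_s\vee K_{a,b}$ with $a=\floor{(n-s)/2}$, $b=\ceil{(n-s)/2}$. The plan is to compute $q(G_s)$ via an equitable quotient matrix and compare it with $L_s(n)$ and $n+R_s(n)$.

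\textbf{Quotient matrix and the case $a=b$.} The partition $V(K_s)\cup A\cup B$ is equitable, and by Lemma \ref{lemquotient} $q(G_s)$ is the largest eigenvalue of
\[
B_\Pi=\begin{bmatrix} n+s-2 & a & b\\ s & s+b & b\\ s & a & s+a\end{bmatrix}.
\]
When $a=b$ we use the coarser equitable partition $V(K_s)\cup V(K_{a,b})$. Its quotient matrix is
\[
\begin{bmatrix} n+s-2 & n-s\\ s & n/2+s/2\end{bmatrix},
\]
since a vertex of $K_{a,a}$ has degree $s+a=(n+s)/2$ and sends $a$ to the bipartite part, giving diagonal entry $(n+s)/2$. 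Its largest root is exactly
\[
\tfrac{3n+3s-4}{4}+\sqrt{\left(\tfrac{n+s-4}{4}\right)^2+s(n-s)}.
\]
One checks that this dominates $L_s(n)$ by estimating the square root from below.

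\textbf{The case $a\neq b$ and the bound $q(G_s)\geq L_s(n)$.} When $a\neq b$, the cleanest route is monotonicity. $G_s$ contains a spanning subgraph whose spectral radius is computable and close to the balanced value, though note that Lemma \ref{lemmonotone} only gives the inequality in the direction subgraph $<$ graph. Alternatively, we show directly that the characteristic polynomial $\phi(x)=\det(xI-B_\Pi)$ satisfies $\phi(L_s(n))\leq 0$. Since $\phi(x)\to+\infty$, this places a root at or above $L_s(n)$. I expect the definition of $L_s$ was designed exactly so that this evaluation works out: the $\sqrt{s(x-s)+(s-2)^2/4}$ term matches the $2\times 2$ quotient when the bipartite part is treated as roughly regular of degree $(n-s)/2$, and the $-1/(x-s)$ term absorbs the imbalance correction. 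The concrete step is to substitute $x=L_s(n)$, write $y=x-n$, and expand $\phi$. The leading terms cancel using $y^2-(s-2)y\approx s(n-s)$, and the remaining terms must be shown nonpositive. This algebraic verification is the main obstacle. I would first try it in the odd case $b=a+1$, using $ab=((n-s)^2-1)/4$, and bound leftover terms crudely using $n\geq 28s+11$.

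\textbf{Comparison with $n+R_s(n)$.} The inequality $L_s(n)>n+R_s(n)$ reduces to
\[
\tfrac{s-2}{2}+\sqrt{s(n-s)}-\tfrac{1}{n-s}>6s-\tfrac{3s(3s+1)}{n-1}.
\]
It suffices that $\sqrt{s(n-s)}\geq \tfrac{11s}{2}+2$. With $n-s\geq 27s+11$ we get $s(n-s)\geq 27s^2+11s$, which exceeds $(5.5s+2)^2=30.25s^2+22s+4$ only if $s$ is small. So $27s^2$ versus $30.25s^2$ fails, and I must instead keep the $+\tfrac{3s(3s+1)}{n-1}$ term and the sharper $L_s$ lower bound. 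This shows the constant $28$ was tuned, and I would redo the comparison carefully with the exact expression, using monotonicity in $n$ to reduce to $n=28s+11$.

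\textbf{The case $s=1$.} Here $L_1$ is not claimed, so we compare the exact largest root of the $3\times 3$ (or $2\times 2$) quotient with $n+6-12/(n-1)$ directly. We use the crude lower bound $q\geq$ the Rayleigh quotient of the $2\times 2$ symmetrized quotient.
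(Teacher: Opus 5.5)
Your overall architecture (a quotient or Rayleigh lower bound, then monotonicity in $n$ with a check at $n=28s+11$, and $s=1$ treated separately) matches the paper's. As written, however, nothing is actually established, and the one explicit computation is wrong.

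In the balanced case the $(2,2)$ entry of the coarse quotient matrix is not $\frac{n+s}{2}$. A vertex of $K_{a,a}$ contributes its diagonal entry $s+a$ and also $a$ off-diagonal ones inside $V(K_{a,b})$, so the entry is $s+2a=n$. The root you wrote down is at most $n+s-2+\frac{2s(n-s)}{n+s-4}\le n+3s-2$ for $s\ge 2$, whereas $L_s(n)>n+5s$ for $n\ge 28s+11$. For instance, with $s=2$, $n=68$ your formula gives about $71.6$ against $L_2(68)\approx 79.47$. So ``one checks that this dominates $L_s(n)$'' is false. With the correct entry the largest root is $n+\frac{s-2}{2}+\sqrt{D}=L_s(n)+\frac{1}{n-s}$, where $D=s(n-s)+\frac{(s-2)^2}{4}$.

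For $a\neq b$ you only announce that $\phi(L_s(n))\le 0$ should hold. One way to supply the missing step is the paper's: compress $Q$ onto the span of the normalized indicator vectors of $V(K_s)$ and $V(K_{a,b})$. This gives a $2\times 2$ matrix with diagonal entries $n+s-2$ and $s+\frac{4ab}{n-s}\ge n-\frac{1}{n-s}$ and off-diagonal entry $\sqrt{s(n-s)}$, and it handles both parities at once. On your own route, the missing idea is the identity
\[
\phi(x)=(x-s)\bigl[(x-n-s+2)(x-n)-s(n-s)\bigr]+s(a-b)^2.
\]
It gives $\phi(L_s(n))=-\frac{L_s(n)-s}{n-s}\bigl(2\sqrt{D}-\frac{1}{n-s}\bigr)+s(a-b)^2<0$, since $L_s(n)-s>n-s$, $(a-b)^2\le 1$ and $2\sqrt{D}>s+1$.

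The inequality $L_s(n)>n+R_s(n)$ is likewise only diagnosed, not proved. You need two things:
(i) $f_s(x)=L_s(x)-x-R_s(x)$ is increasing on $[28s+11,\infty)$. The paper gets this from $\frac{s}{2\sqrt{s(x-s)+(s-2)^2/4}}>\frac{s}{2x}>\frac{3s(3s+1)}{(x-1)^2}$, where the last step holds because $(x-1)^2>6(3s+1)x$.
(ii) An exact proof that $f_s(28s+11)>0$. The margin here is tiny (e.g.\ $f_2(67)=\sqrt{130}-\frac{8136}{715}\approx 0.023$), so it needs careful algebra, not a rough estimate.

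The same applies to $s=1$. Your $2\times 2$ compression does suffice there, but for odd $n$ it gives exactly $q=n-\frac12+\sqrt{n-\frac34}$. At $n=39$ this is about $44.68466$ against $n+R_1(n)\approx 44.68421$, so the comparison must be verified exactly for every $n\ge 39$. The paper does this by evaluating $p(x)=x(x-n)^2-4ab$ at $n+R_1(n)$, separately for odd and even $n$.
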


\begin{proof}
First suppose $s=1$. Let $A_1$ and $B_1$ be the two partite sets of $K_{\floor{(n-1)/2},\ceil{(n-1)/2}}$, where $|A_1|=\floor{\frac{n-1}{2}}$ and $|B_1|=\ceil{\frac{n-1}{2}}$. Let $a=|A_1|$ and $b=|B_1|$. The equitable quotient matrix of $Q\left(K_1\vee K_{\floor{(n-1)/2},\ceil{(n-1)/2}}\right)$ with respect to the partition $V(K_1),A_1,B_1$ is
\[
\begin{bmatrix}
n-1 & a & b\\
1 & b+1 & b\\
1 & a & a+1
\end{bmatrix},
\]
and its characteristic polynomial is $p(x)=x(x-n)^2-4ab$.

Since $p'(x)=(x-n)(3x-n)>0$ for $x>n$ and $p(n)=-4ab<0$, the polynomial $p(x)$ has a unique root in $(n,+\infty)$, namely $q(K_1\vee K_{a,b})$ by Lemma \ref{lemquotient}. Since $R_1(n)=6-\frac{12}{n-1}>0$, it suffices to prove that $p(n+R_1(n))<0$. If $n$ is odd, then $4ab=(n-1)^2$, and we have
\begin{equation*}
    4ab-(n+R_1(n))R^{2}_1(n)=\frac{(n^2+4n-17)(n^3-45n^2+243n-343)}{(n-1)^3}>0
\end{equation*}
for every odd $n\geq 39$. If $n$ is even, then $4ab=(n-1)^2-1$, and we have
\begin{equation*}
    4ab-(n+R_1(n))R^{2}_1(n)=\frac{n^5-41n^4+45n^3+1397n^2-5506n+5832}{(n-1)^3}>0
\end{equation*}
for every even $n\geq 40$. Thus $q\left(K_1\vee K_{\floor{(n-1)/2},\ceil{(n-1)/2}}\right)>n+R_1(n)$ for $n\geq 28s+11=39$.

Now assume $s\geq 2$. Let $A_s$ and $B_s$ be the two partite sets of $K_{\floor{(n-s)/2},\ceil{(n-s)/2}}$, where $|A_s|=\floor{\frac{n-s}{2}}$ and $|B_s|=\ceil{\frac{n-s}{2}}$. Let $a=|A_s|$ and $b=|B_s|$. For simplicity, let $Q=Q\left(K_s\vee K_{\floor{(n-s)/2},\ceil{(n-s)/2}}\right)$ with the rows and columns ordered according to the vertex partition $V(K_s),A_s,B_s$. Let $V=V\left(K_s\vee K_{\floor{(n-s)/2},\ceil{(n-s)/2}}\right)$, and let $\bm{\alpha}=(\alpha_v)_{v\in V}$ and $\bm{\beta}=(\beta_v)_{v\in V}$ such that
\[
\alpha_v=\begin{cases}
    1, & \text{if $v\in V(K_s)$};\\
    0, & \text{if $v\notin V(K_s)$},
\end{cases}\quad\quad
\beta_v=\begin{cases}
    1, & \text{if $v\in V(K_{\floor{(n-s)/2},\ceil{(n-s)/2}})$};\\
    0, & \text{if $v\notin V(K_{\floor{(n-s)/2},\ceil{(n-s)/2}})$}.
\end{cases}
\]
Then we have
\begin{equation}\label{eqclm1one}
    \begin{bmatrix}
    \frac{1}{\sqrt{s}}\bm{\alpha}^{\T}\\
    \frac{1}{\sqrt{n-s}}\bm{\beta}^{\T}
\end{bmatrix}Q\begin{bmatrix}
    \frac{1}{\sqrt{s}}\bm{\alpha} & \frac{1}{\sqrt{n-s}}\bm{\beta}
\end{bmatrix}=\begin{bmatrix}
    \frac{1}{s}\bm{\alpha}^{\T} Q \bm{\alpha} & \frac{1}{\sqrt{s(n-s)}}\bm{\alpha}^{\T} Q \bm{\beta}\\
    \frac{1}{\sqrt{s(n-s)}}\bm{\beta}^{\T} Q \bm{\alpha} & \frac{1}{n-s}\bm{\beta}^{\T} Q \bm{\beta}
\end{bmatrix}.
\end{equation}
Note that
\begin{equation*}
Q=\begin{bmatrix}
(n-1)I_s+A(K_s) & J_{s\times a} & J_{s\times b}\\
J_{a\times s} & (s+b)I_a & J_{a\times b}\\
J_{b\times s} & J_{b\times a} & (s+a)I_b
\end{bmatrix}.    
\end{equation*}
Combining this with \eqref{eqclm1one} gives
\begin{equation}\label{eqclm1two}
    \begin{bmatrix}
    \frac{1}{\sqrt{s}}\bm{\alpha}^{\T}\\
    \frac{1}{\sqrt{n-s}}\bm{\beta}^{\T}
\end{bmatrix}Q\begin{bmatrix}
    \frac{1}{\sqrt{s}}\bm{\alpha} & \frac{1}{\sqrt{n-s}}\bm{\beta}
\end{bmatrix}=\begin{bmatrix}
 n+s-2 & \sqrt{s(n-s)}\\
 \sqrt{s(n-s)} & s+\frac{4ab}{n-s}
\end{bmatrix}.
\end{equation}
Clearly, 
\begin{equation*}
    \begin{bmatrix}
    \frac{1}{\sqrt{s}}\bm{\alpha}^{\T}\\
    \frac{1}{\sqrt{n-s}}\bm{\beta}^{\T}
\end{bmatrix}\begin{bmatrix}
    \frac{1}{\sqrt{s}}\bm{\alpha} & \frac{1}{\sqrt{n-s}}\bm{\beta}
\end{bmatrix}=\begin{bmatrix}1 & 0\\0 & 1\end{bmatrix}.
\end{equation*}
Since $ab\geq \frac{((n-s)^2-1)}{4}$, we have $s+\frac{4ab}{n-s}\geq n-\frac{1}{n-s}$. It follows from \eqref{eqclm1two} and Rayleigh's principle that
\begin{align*}
q\left(K_s\vee K_{\floor{(n-s)/2},\ceil{(n-s)/2}}\right)&=\max_{\|\bm{y}\|=1}\bm{y}^{\T}Q\bm{y}\\
&\geq \max_{\substack{\bm{z}\in\mathbb{R}^{2}\\ \|\bm{z}\|=1}}\bm{z}^{\T}\begin{bmatrix}
    \frac{1}{\sqrt{s}}\bm{\alpha}^{\T}\\
    \frac{1}{\sqrt{n-s}}\bm{\beta}^{\T}
\end{bmatrix}Q\begin{bmatrix}
    \frac{1}{\sqrt{s}}\bm{\alpha} & \frac{1}{\sqrt{n-s}}\bm{\beta}
\end{bmatrix}\bm{z}\\
&= \lambda_{\max}
\begin{pmatrix}
 n+s-2 & \sqrt{s(n-s)}\\
 \sqrt{s(n-s)} & s+\frac{4ab}{n-s}
\end{pmatrix} \\&\geq \lambda_{\max}
\begin{pmatrix}
 n+s-2 & \sqrt{s(n-s)}\\
 \sqrt{s(n-s)} & n-\frac{1}{n-s}
\end{pmatrix} \\
&=n+\frac{s-2}{2}-\frac{1}{2(n-s)}
+\sqrt{s(n-s)+\frac{(s-2+\frac{1}{n-s})^2}{4}}\\
&\geq n+\frac{s-2}{2}+\sqrt{s(n-s)+\frac{(s-2)^2}{4}}-\frac{1}{n-s}=L_s(n).
\end{align*}
It remains to show $L_s(n)>n+R_s(n)$ for $n\geq 28s+11$.  Let $f_s(x)=L_s(x)-x-R_s(x)$. For $x\geq 28s+11$, by \eqref{eqdefR} and \eqref{eqthmdefLsn},  we have
\[
f_s'(x)=\frac{s}{2\sqrt{s(x-s)+\frac{(s-2)^2}{4}}}+\frac{1}{(x-s)^2}-\frac{3s(3s+1)}{(x-1)^2}.
\]
Note that $s(x-s)+\frac{(s-2)^2}{4}=sx-\frac{3s^2+4s-4}{4}
<sx<x^2$. Hence $\sqrt{s(x-s)+\frac{(s-2)^2}{4}}<x$. Moreover, $\frac{(x-1)^2}{x}
=x-2+\frac{1}{x} \geq28s+9+\frac{1}{x} >18s+6 =6(3s+1)$. It follows that $\frac{s}{2\sqrt{s(x-s)+\frac{(s-2)^2}{4}}}>\frac{s}{2x}>\frac{3s(3s+1)}{(x-1)^2}$. Therefore, $f_s'(x)>\frac{1}{(x-s)^2}>0$.

Now it suffices to check $f_s(28s+11)>0$. If $s=2$, then $f_2(67)=\sqrt{130}-\frac{8136}{715}>0$. If $s\geq 3$, let $n_0=28s+11$. Then $s(n_0-s)+\frac{(s-2)^2}{4}=\frac{109s^2}{4}+10s+1$. Moreover, 
\begin{equation*}
s(n_0-s)+\frac{(s-2)^2}{4}= s(27s+11)+\frac{(s-2)^2}{4}=\frac{109}{4}s^2+10s+1>\frac{676}{25}s^2.
\end{equation*}
It follows that $\sqrt{s(n_0-s)+\frac{(s-2)^2}{4}}>\frac{26s}{5}$. Then we obtain
\begin{align*}
\sqrt{s(n_0-s)+\frac{(s-2)^2}{4}}-\frac{11s+2}{2}
&=-\frac{3s^2+s}{\frac{11s+2}{2}+\sqrt{s(n_0-s)+\frac{(s-2)^2}{4}}}\\
&>-\frac{10s(3s+1)}{107s+10}.
\end{align*}
Consequently, for $s\geq 3$, we have
\begin{align*}
f_s(n_0)&=\sqrt{s(n_0-s)+\frac{(s-2)^2}{4}}-\frac{11s+2}{2}-\frac{1}{27s+11}+\frac{3s(3s+1)}{28s+10}\\
&>\frac{3s(3s+1)}{28s+10}-\frac{10s(3s+1)}{107s+10}-\frac{1}{27s+11}\\
&=\frac{3321s^4-3210s^3-6745s^2-2120s-100}{2(14s+5)(107s+10)(27s+11)}\\
&=\frac{3321(s-3)^4+36642(s-3)^3+143699(s-3)^2+229408(s-3)+115166}{2(14s+5)(107s+10)(27s+11)}\\
&>0.
\end{align*}

Therefore, $f_s(n)>0$ for all $n\geq 28s+11$. This completes the proof of Claim \ref{clm1}.
\end{proof}

\begin{claim}\label{clm2}
Let $s\geq 2$, $x\geq 28s+11$, and $1\leq h\leq s-1$. 
Then $L_s(x)>U_{s,h}(x)$.
\end{claim}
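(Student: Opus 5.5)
We plan to read both sides of Claim \ref{clm2} as largest eigenvalues of explicit $2\times 2$ symmetric matrices, reduce $U_{s,h}$ to an $h$-free upper bound, and then repeat the derivative-plus-base-point argument used for Claim \ref{clm1}. Put $k=s-h\geq 1$, $y=x-k$ and $R=R_h(y)$. Since $y\geq x-s+1\geq 27s+12>1$, we have $0<R<6h$. A direct computation shows that $U_{s,h}(x)$ is exactly
\[
\lambda_{\max}\begin{pmatrix} x+k-2 & \sqrt{ky}\\ \sqrt{ky} & x+R\end{pmatrix},
\]
because $(a+d)/2=x+(k-2+R)/2$, $(a-d)/2=(k-2-R)/2$ and $c^2=ky$. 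The largest eigenvalue of a symmetric matrix is nondecreasing in each diagonal entry, so we may replace $R$ by $6h$. Together with $ky\leq kx$ this gives
\[
U_{s,h}(x)\leq \widetilde U_h(x):=x+\frac{s+5h-2}{2}+\sqrt{(s-h)x+\frac{(s-5h-2)^2}{4}} .
\]
Similarly, $L_s(x)+\frac1{x-s}$ is the largest eigenvalue of the matrix with diagonal entries $x+s-2$ and $x$ and off-diagonal entry $\sqrt{s(x-s)}$. It therefore suffices to show $L_s(x)>\widetilde U_h(x)$ for all $1\leq h\leq s-1$ and $x\geq 28s+11$.

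\textbf{Step 1 (remove $h$).} For fixed $x$, regard $h\mapsto \widetilde U_h(x)$ as a function of a real variable $h\in[1,s-1]$. The linear part $5h/2$ is convex. The term $\sqrt{(s-h)x+(s-5h-2)^2/4}$ is the square root of a quadratic in $h$ with positive leading coefficient $25/4$, and it stays positive on this interval. Such a square root is convex: it is the Euclidean norm of an affine map of $h$ whenever the quadratic has nonpositive discriminant. In the remaining case one can instead check the second derivative directly, since the coefficient $x$ is large. Hence the maximum over $h$ is attained at an endpoint, and it suffices to treat $h=1$ and $h=s-1$.

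\textbf{Step 2 (monotonicity in $x$).} Let $\varphi_h(x)=L_s(x)-\widetilde U_h(x)$. Then
\[
\varphi_h'(x)=\frac{s}{2\sqrt{s(x-s)+(s-2)^2/4}}-\frac{s-h}{2\sqrt{(s-h)x+(s-5h-2)^2/4}}+\frac{1}{(x-s)^2}.
\]
For $x\geq 28s+11$ the first term dominates the second. Roughly, the comparison is $\sqrt{s}/(2\sqrt{x})$ against $\sqrt{s-h}/(2\sqrt{x})$, and the lower-order constants inside the square roots are negligible at this size of $x$; this should follow from elementary estimates like those in Claim \ref{clm1}. So $\varphi_h$ is increasing, and we only need $\varphi_h(28s+11)>0$ for $h\in\{1,s-1\}$.

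\textbf{Step 3 (base point), the main obstacle.} Put $n_0=28s+11$. The case $h=1$ is comfortable: $\sqrt{s(n_0-s)}-\sqrt{(s-1)n_0}\approx \sqrt{n_0}/(2\sqrt s)\approx 2.6$, which beats the constant difference $(5h)/2-0=5/2$ only narrowly. The gain from the $(s-5h-2)^2/4$ term is also small, so this case will need an exact rationalisation as in Claim \ref{clm1}. The delicate case is $h=s-1$. There the bound reads
\[
\widetilde U_{s-1}(n_0)=n_0+3s-\tfrac72+\sqrt{n_0+(4s-3)^2/4},
\]
which has to be compared with
\[
L_s(n_0)=n_0+\tfrac{s-2}{2}+\sqrt{\tfrac{109}{4}s^2+10s+1}-\tfrac1{27s+11}.
\]
Here $\sqrt{109}/2\approx 5.22$ must beat $5/2+2=4.5$ in the coefficient of $s$, so the inequality holds with margin about $0.7s$ for large $s$. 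For small $s$ we will clear the square roots, writing the difference of the two square roots as a quotient as in Claim \ref{clm1}. This yields a polynomial inequality in $s$, which we verify by expanding in powers of $s-2$ with positive coefficients, and we check $s=2$ separately by direct numerical evaluation. If the crude replacement $R\leq 6h$ turns out to be too lossy for some small $s$, we will fall back on the sharper bound $R\leq R_h(x-1)=6h-\frac{3h(3h+1)}{x-2}$. This recovers a term of order $9h^2/(56s)$ at $x=n_0$.
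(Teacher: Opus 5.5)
\textbf{Verdict: there is a genuine gap.} Your bound $U_{s,h}\le \widetilde U_h$ is valid, but it discards exactly the terms that make the claim true. As a result, the inequality $L_s(x)>\widetilde U_h(x)$ that you set out to prove is false at the base point $n_0=28s+11$.

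\textbf{The replacement $R\le 6h$.} There is first an algebra slip. With $R=6h$ the half-difference of the diagonal entries is $\frac{s-h-2-6h}{2}=\frac{s-7h-2}{2}$, so the radicand of $\widetilde U_h$ is $(s-h)x+\frac{(s-7h-2)^2}{4}$, not $(s-h)x+\frac{(s-5h-2)^2}{4}$.
\begin{itemize}
\item With the correct term, $\widetilde U_{s-1}(n_0)=n_0+3s-\frac72+\sqrt{n_0+\frac{(6s-5)^2}{4}}\approx n_0+6s$. By contrast, $L_s(n_0)\approx n_0+\frac{1+\sqrt{109}}{2}s\approx n_0+5.72s$. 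So at $h=s-1$ the margin is about $\frac{\sqrt{109}-11}{2}s<0$, not $+0.7s$.
\item Even $s=2$ fails: $L_2(67)=67+\sqrt{130}-\frac1{65}\approx 78.386$, whereas $\widetilde U_1(67)=69.5+\sqrt{79.25}\approx 78.402$.
\item The piece you dropped, $\frac{3h(3h+1)}{x-s+h-1}\approx\frac{9s}{28}$ at $h=s-1$, $x=n_0$, is much larger than the true margin, which is roughly $0.04s$. This is why the paper keeps $X=\frac{3h(3h+1)}{27s+h+10}$. It needs $X(171s+36h+66)>9h^2\left(6+\frac hs\right)$ to cancel the $-180h^2$ in $N$.
\item Your fallback $R\le R_h(x-1)$ would repair this part, since it is exact at $h=s-1$. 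However, the failure is worst for large $s$, not small $s$.
\end{itemize}

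\textbf{The replacement $(s-h)(x-s+h)\le (s-h)x$.} This is fatal at $h=1$ no matter how you bound $R$. It enlarges the radicand by $(s-h)^2$, so the $-s^2$ inside $s(x-s)$ is no longer compensated. At $x=n_0$ and $h=1$ the difference of the two radicands is
\[
s(n_0-s)+\tfrac{(s-2)^2}{4}-(s-1)n_0-\tfrac{(s-9)^2}{4}=-s^2+\tfrac{63}{2}s-\tfrac{33}{4}.
\]
This is negative for $s\ge 32$, so $L_s(n_0)-\widetilde U_1(n_0)<-\frac52$ there. Your heuristic $\sqrt{s(n_0-s)}-\sqrt{(s-1)n_0}\approx \frac{\sqrt{n_0}}{2\sqrt s}$ silently drops this $-s^2$, which exceeds $n_0=28s+11$ once $s\ge 29$.

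\textbf{Step 1 is also misstated.} With the corrected coefficients, the second $h$-derivative of $\sqrt{(s-h)x+\frac{(s-7h-2)^2}{4}}$ has the sign of $x(42s+14-x)$ for every $h$. So for large $x$ the square root is concave in $h$, and no second-derivative check can restore convexity. The endpoint reduction could only be used at $x=n_0$, after Step 2.

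\textbf{What is needed.} To close the gap you must keep $(s-h)(x-s+h)$ and the correction term of $R_h$ exactly. That leads back to the paper's route:
\begin{itemize}
\item a derivative bound for the exact difference $f_{s,h}$, split into $h\ge s/2$ and $h<s/2$, the latter via $s^2B-(s-h)^2A>s(s-h)h(x-3s-1)$;
\item a base-point check at $x=28s+11$ that treats all $h$ at once through $K^2>B$, where the term $X$ is indispensable.
\end{itemize}
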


\begin{proof}
Let $f_{s,h}(x)=L_s(x)-U_{s,h}(x)$. By \eqref{eqdefR}, \eqref{eqthmdefLsn} and \eqref{eqthmdefU}, we have
\begin{equation*}
f'_{s,h}(x)=\frac{s}{2\sqrt{s(x-s)+\frac{(s-2)^2}{4}}}+\frac{1}{(x-s)^2}
-\frac{g_{s,h}'(x)}{2\sqrt{g_{s,h}(x)}}
-\frac{3h(3h+1)}{2(x-s+h-1)^2},
\end{equation*}
where $g_{s,h}(x)=(s-h)(x-s+h)+\frac{(s-h-2-R_h(x-s+h))^2}{4}$.

As verified in Appendix \ref{appemonoton}, we have $f_{s,h}'(x)>0$ for  $x\geq 28s+11$ and $1\leq h\leq s-1$. Hence $f_{s,h}(x)$ increases strictly for $x\geq 28s+11$ and $1\leq h\leq s-1$. It remains to check $f_{s,h}(x)>0$ when $x=28s+11$. The calculation in Appendix \ref{appevalue} gives $f_{s,h}(28s+11)>0$. This completes the proof of Claim \ref{clm2}.
\end{proof}

We now continue with the proof. By Lemma \ref{lemjoin}, $K_{t-1}\vee K_{\floor{(n-t+1)/2},\ceil{(n-t+1)/2}}$ is $tK_3$-free.  Let $G$ be a $tK_3$-free graph of order $n\geq 28t-17$ with maximum signless Laplacian spectral radius.  Then
\begin{equation}\label{eqthmpfqG1}
q(G)\geq q\left(K_{t-1}\vee K_{\floor{(n-t+1)/2},\ceil{(n-t+1)/2}}\right).
\end{equation}
By Claim \ref{clm1}, for $s=t-1$, we have
\begin{equation}\label{eqthmpfqG2}
q(G)>n+R_{t-1}(n).
\end{equation}

We show that $G$ is connected.  Otherwise, add one edge $uv$ between a component $C_1$ with $q(C_1)=q(G)$ and another component $C_2$. Then $G+uv$ is $tK_3$-free. Let $C=(C_1 \cup C_2)+uv$. Then $C$ is a subgraph of $G+uv$, and $C_1$ is a proper subgraph of $C$. Thus $q(G+uv)\geq q(C)>q(C_1)=q(G)$ by Lemma \ref{lemmonotone}, which contradicts the maximality of $q(G)$.

We prove by induction on $r$ that, for each $0\leq r\leq t-1$, there exists a $(t-r)K_3$-free graph $H_r$ such that
\begin{equation}\label{eqthmpfexiHr}
G=K_r\vee H_r.
\end{equation}
For $r=0$, this is trivial with $H_0=G$. 

Assume that \eqref{eqthmpfexiHr} holds for some $0\leq r\leq t-2$. We first claim that
\begin{equation}\label{qethmqHr}
 q(H_r)>|V(H_r)|+R_{t-1-r}(|V(H_r)|).
\end{equation}
When $r=0$, we have $q(H_r)>|V(H_r)|+R_{t-1-r}(|V(H_r)|)$ by \eqref{eqthmpfqG2}. When $r\geq 1$, suppose to the contrary that $q(H_r)\leq |V(H_r)|+R_{t-1-r}(|V(H_r)|)$. Clearly, $t-1\geq 2$ by the assumption $r\leq t-2$. Let the rows and columns of $Q(G)$ be ordered according to the partition $V(G)=V(K_r)\cup V(H_r)$. For any unit vector $\bm{y}\in\mathbb{R}^{|V(G)|}$, write $\bm{y}=
\begin{bmatrix}
\bm{y}_1 & \bm{y}_2
\end{bmatrix}^{\T}$ according to the partition $V(G)=V(K_r)\cup V(H_r)$. Since
\[
Q(G)=
\begin{bmatrix}
(n-1)I_r+A(K_r) & J_{r\times |V(H_r)|}\\
J_{|V(H_r)|\times r} & Q(H_r)+rI_{|V(H_r)|}
\end{bmatrix},
\]
by the Cauchy--Schwarz inequality and Rayleigh's principle, we have
\begin{align*}
\bm{y}^{\T}Q(G)\bm{y}
&= \bm{y}_1^{\T}\bigl((n-1)I_r+A(K_r)\bigr)\bm{y}_1 +2\bm{y}_1^{\T}J_{r\times |V(H_r)|}\bm{y}_2 +\bm{y}_2^{\T}\bigl(Q(H_r)+rI_{|V(H_r)|}\bigr)\bm{y}_2\\
&=(n-2)\|\bm{y}_1\|^2+\bigl(\bm{1}_r^{\T}\bm{y}_1\bigr)^2+2\bigl(\bm{1}_r^{\T}\bm{y}_1\bigr)\bigl(\bm{1}_{|V(H_r)|}^{\T}\bm{y}_2\bigr)+\bm{y}_2^{\T}\bigl(Q(H_r)+rI_{|V(H_r)|}\bigr)\bm{y}_2\\
&\leq (n+r-2)\|\bm{y}_1\|^2+2\sqrt{r|V(H_r)|}\|\bm{y}_1\|\|\bm{y}_2\|+\bigl(q(H_r)+r\bigr)\|\bm{y}_2\|^2\\
&\leq (n+r-2)\|\bm{y}_1\|^2
+2\sqrt{r|V(H_r)|}\|\bm{y}_1\|\|\bm{y}_2\|+\bigl(n+R_{t-1-r}(|V(H_r)|)\bigr)\|\bm{y}_2\|^2.
\end{align*}
Therefore, by Rayleigh's principle,
\begin{align*}
    q(G)&=\max_{\|\bm{y}\|=1}\left\{\bm{y}^{\mathsf T}Q(G)\bm{y}\right\}
\leq \max\left\{\begin{bmatrix}
\|\bm{y}_1\|&\|\bm{y}_2\|
\end{bmatrix}
\begin{bmatrix}
n+r-2 & \sqrt{r|V(H_r)|}\\
\sqrt{r|V(H_r)|} & n+R_{t-1-r}(|V(H_r)|)
\end{bmatrix}
\begin{bmatrix}
\|\bm{y}_1\|\\
\|\bm{y}_2\|
\end{bmatrix}\right\}\\
&\leq
\lambda_{\max}
\begin{bmatrix}
n+r-2 & \sqrt{r|V(H_r)|}\\
\sqrt{r|V(H_r)|} & n+R_{t-1-r}(|V(H_r)|)
\end{bmatrix}=U_{t-1,t-r-1}(n).
\end{align*}
Hence $q(G)\leq U_{t-1,t-r-1}(n)$. On the other hand, by \eqref{eqthmdefLsn}, \eqref{eqthmpfqG1} and Claim \ref{clm1}, we obtain $q(G)\geq q\left(K_{t-1}\vee K_{\floor{(n-t+1)/2},\ceil{(n-t+1)/2}}\right)\geq L_{t-1}(n)$. However, $L_{t-1}(n)>U_{t-1,t-r-1}(n)$ by Claim \ref{clm2}, a contradiction.  Thus \eqref{qethmqHr} holds.

Note that $|V(H_r)|=n-r\geq 3(t-r-1)+2$. Since $H_r$ is $(t-r)K_3$-free, by \eqref{qethmqHr} and Lemma \ref{lem1forthm}, there exists a vertex $v\in V(H_r)$ such that $H_r-\{v\}$ is $(t-r-1)K_3$-free.  If $v$ is not adjacent to every vertex of $H_r-\{v\}$, then we add all edges between $v$ and the vertices in $V(H_r)\setminus\left(N_{H_r}(v)\cup\{v\}\right)$.  The resulting graph is $K_{r+1}\vee (H_r-\{v\})$, which is still $tK_3$-free by Lemma \ref{lemjoin}.  Clearly, $G$ is a proper subgraph of $K_{r+1}\vee (H_r-\{v\})$, and thus $q(G)< q\left(K_{r+1}\vee (H_r-\{v\})\right)$ by Lemma \ref{lemmonotone}, which contradicts the maximality of $q(G)$.  Hence $N_{H_r}(v)=V(H_r)\setminus\{v\}$, and $G=K_{r+1}\vee (H_r-\{v\})$. This completes the induction, and thus \eqref{eqthmpfexiHr} holds.

When $r=t-1$, we obtain $G=K_{t-1}\vee H$, where $H=H_{t-1}$ is $K_3$-free and has order $n-t+1$. We next show that $H$ is a complete bipartite graph. Let $\bm{x}=(x_v)_{v\in V(G)}$ be a positive unit Perron vector of $Q(G)$, and choose $w\in V(H)$ such that $x_w=\max\{x_v:v\in V(H)\}$.  Let $A=N_H(w)$, and $B=V(H)\setminus A$, and $H'$ be the complete bipartite graph with parts $A$ and $B$, and let $G'=K_{t-1}\vee H'$. Note that $w\in B$.

If $A=\varnothing$, then by Lemma \ref{lem2forthm}, we have $q(G')\geq q(G)$, where $H'=(n-t+1)K_1$.  But $G'=K_{t-1}\vee (n-t+1)K_1$ is a proper subgraph of $K_{t-1}\vee K_{\floor{(n-t+1)/2},\ceil{(n-t+1)/2}}$, and thus Lemma \ref{lemmonotone} gives $q(G')<q(K_{t-1}\vee K_{\floor{(n-t+1)/2},\ceil{(n-t+1)/2}})\leq q(G)$, a contradiction.

Since $H'$ is bipartite, $H'$ is $K_3$-free, and thus $G'$ is $tK_3$-free by Lemma \ref{lemjoin}. Using Lemma \ref{lem2forthm} gives $q(G')\geq q(G)$.  By the maximality of $q(G)$, we must have $q(G')=q(G)$.  Since $A\neq \varnothing$, Lemma \ref{lem2forthm} implies $H=H'$. Therefore, $H=K_{a,b}$, where $a=|A|$ and $b=|B|$.

It remains to determine $a$ and $b$.  Let $G_{a,b}=K_{t-1}\vee K_{a,b}$, and let $A$ and $B$ be the two partite sets of $K_{a,b}$ with $|A|=a$ and $|B|=b$, where $a+b=n-t+1$. The equitable quotient matrix of $Q(G_{a,b})$ with respect to the partition $V(K_{t-1}),A,B$ is 
\begin{equation*}
\begin{bmatrix}
 n+t-3 & a & b\\
 t-1 & t+b-1 & b\\
 t-1 & a & t+a-1
\end{bmatrix},
\end{equation*}
and its characteristic polynomial is
\begin{equation*}
    p_{a,b}(x)=x^3-2(n+t-2)x^2+(n^2+(2t-4)n+2t^2-6t+4)x-2n(t-1)(t-2)-4(t-1)ab.
\end{equation*}

By Lemma \ref{lemquotient}, $q(G_{a,b})$ is the largest root of $p_{a,b}(x)$. Since $t\geq 2$, increasing $ab$ strictly decreases $p_{a,b}(x)$ for every fixed $x$.  Therefore $q(G_{a,b})$ is strictly increasing as $ab$ increases.  Consequently, $q(G_{a,b})$ attains its maximum when
\[
\{a,b\}=\left\{\floor{\frac{n-t+1}{2}},\ceil{\frac{n-t+1}{2}}\right\}.
\]
Therefore, $G=G_{a,b}=K_{t-1}\vee K_{\floor{(n-t+1)/2},\ceil{(n-t+1)/2}}$. This completes the proof.
\end{proof}

\noindent
{\bf Acknowledgments}\,

The author would like to thank Prof. Bo Ning for his helpful discussions and comments.  This work is supported by National Natural Science Foundation of China grant 12371350 and the Fundamental Research Funds for the Central Universities, Nankai University (No. 63263259).

\noindent
{\bf AI Statement}\,

The author used ChatGPT (GPT-5.5 Pro) to assist with language editing, including proofreading and grammatical correction, throughout the paper.
	
\bibliographystyle{unsrt}

\begin{thebibliography}{99}

\bibitem{BabaiGuiduli}
L. Babai, B. Guiduli,
Spectral extrema for graphs: the Zarankiewicz problem,
\emph{Electron. J. Combin.} {\bfseries 16} (2009) \#R123.

\bibitem{BondyM}
J.A. Bondy, U.S.R. Murty,
Graph Theory,
Springer, New York, 2008.

\bibitem{BrouwerH}
A.E. Brouwer, W.H. Haemers,
\emph{Spectra of Graphs},
Springer, Berlin, 2011.

\bibitem{CioabaDTWheel}
S. Cioab\u{a}, D.N. Desai, M. Tait,
The spectral radius of graphs with no odd wheels,
\emph{European J. Combin.} {\bfseries 99} (2022) 103420.

\bibitem{CvetkovicRS}
D. Cvetkovi\'{c}, P. Rowlinson, S.K. Simi\'{c},
Eigenvalue bounds for the signless Laplacian,
\emph{Publ. Inst. Math.} {\bfseries 81} (95) (2007) 11--27.

\bibitem{DasLap}
K.C. Das,
The Laplacian spectrum of a graph,
\emph{Comput. Math. Appl.} {\bfseries 48} (2004) 715--724.

\bibitem{FengYuLap}
L.H. Feng, G.H. Yu,
On three conjectures involving the signless Laplacian spectral radius of graphs,
\emph{Publ. Inst. Math.} {\bfseries 85} (99) (2009) 35--38.

\bibitem{GodsilR}
C. Godsil, G. Royle,
\emph{Algebraic Graph Theory},
Graduate Texts in Mathematics, vol. 207,
Springer-Verlag, New York, 2001.

\bibitem{HeJZKr}
B. He, Y.-L. Jin, X.-D. Zhang,
Sharp bounds for the signless Laplacian spectral radius in terms of clique number,
\emph{Linear Algebra Appl.} {\bfseries 438} (2013) 3851--3861.

\bibitem{LiuNingUnsolve}
L.L. Liu, B. Ning,
Unsolved problems in spectral graph theory,
\emph{Oper. Res. Trans.} {\bfseries 27} (4) (2023) 33--60.

\bibitem{LiuNingSparse}
L.L. Liu, B. Ning,
Spectral Tur\'{a}n-type problems on sparse spanning graphs,
\emph{Discrete Math.} {\bfseries 349} (2026) 115016.

\bibitem{LiuMiaobook}
R. Liu, L. Miao,
Spectral Tur\'{a}n problem of non-bipartite graphs: forbidden books,
\emph{European J. Combin.} {\bfseries 126} (2025) 104136.

\bibitem{Mantel}
W. Mantel,
Problem 28,
\emph{Wiskundige Opgaven} {\bfseries 10} (1907) 60--61.

\bibitem{NiWangKang23}
Z.Y. Ni, J. Wang, L.Y. Kang,
Spectral extremal graphs for disjoint cliques,
\emph{Electron. J. Combin.} {\bfseries 30} (2023) \#P1.20.

\bibitem{Niki2007}
V. Nikiforov,
Bounds on graph eigenvalues II,
\emph{Linear Algebra Appl.} {\bfseries 427} (2007) 183--189.

\bibitem{NikiZaraprob}
V. Nikiforov,
A contribution to the Zarankiewicz problem,
\emph{Linear Algebra Appl.} {\bfseries 432} (2010) 1405--1411.

\bibitem{Nikispecturan}
V. Nikiforov,
The spectral radius of graphs without paths and cycles of specified length,
\emph{Linear Algebra Appl.} {\bfseries 432} (2010) 2243--2256.

\bibitem{NikiYuanevenC}
V. Nikiforov, X.Y. Yuan,
Maxima of the $Q$-index: forbidden even cycles,
\emph{Linear Algebra Appl.} {\bfseries 471} (2015) 636--653.

\bibitem{Turan}
P. Tur\'{a}n,
On an extremal problem in graph theory,
\emph{Mat. Fiz. Lapok} {\bfseries 48} (1941) 436--452.

\bibitem{YuK2}
G. Yu,
On the maximal signless Laplacian spectral radius of graphs with given matching number,
\emph{Proc. Japan Acad. Ser. A Math. Sci.} {\bfseries 84} (2008) 163--166.

\bibitem{YuanoddC}
X.Y. Yuan,
Maxima of the $Q$-index: forbidden odd cycles,
\emph{Linear Algebra Appl.} {\bfseries 458} (2014) 207--216.

\bibitem{ZhaiLinC6}
M.Q. Zhai, H.Q. Lin,
Spectral extrema of graphs: forbidden hexagon,
\emph{Discrete Math.} {\bfseries 343} (2020) 112028.

\bibitem{ZhaiWangC4}
M.Q. Zhai, B. Wang,
Proof of a conjecture on the spectral radius of $C_4$-free graphs,
\emph{Linear Algebra Appl.} {\bfseries 437} (2012) 1641--1647.

\bibitem{ZhangLei2K3}
H.X. Zhang, Y. Lei,
Maxima of the signless Laplacian spectral radius of $2K_3$-free graphs,
\emph{Discrete Math.} {\bfseries 349} (2026) 115165.

\bibitem{ZhangWang2K3}
Y.T. Zhang, L.G. Wang,
The signless Laplacian spectral radius of $2K_3$-free graphs,
\emph{Discrete Math.} {\bfseries 347} (2024) 114075.

\bibitem{ZhangWang3K3}
Y.T. Zhang, L.G. Wang,
Maxima of the $Q$-index for $3K_3$-free graphs,
\emph{Discrete Appl. Math.} {\bfseries 358} (2024) 448--456.

\bibitem{ZhaoHLWheel}
Y.H. Zhao, X.Y. Huang, H.Q. Lin,
The maximum spectral radius of wheel-free graphs,
\emph{Discrete Math.} {\bfseries 344} (2021) 112341.

\bibitem{ZhengLiSu26}
J. Zheng, H.H. Li, L. Su,
A signless Laplacian spectral Erd\H{o}s--Stone--Simonovits theorem,
\emph{Discrete Math.} {\bfseries 349} (2026) 114665.

\bibitem{ZhengLF26}
J. Zheng, Y.T. Li, Y.-Z. Fan,
Some Tur\'{a}n-type results for the signless Laplacian spectral radius,
\emph{European J. Combin.} {\bfseries 135} (2026) 104373.

\end{thebibliography}

\appendix

\refstepcounter{section}
\section*{Appendix \thesection.\ Calculations for Claim \ref{clm2}}
\addcontentsline{toc}{section}{Appendix \thesection.\ Calculations for Claim \ref{clm2}}
Recall that $s\geq 2$, $x\geq28s+11$, $1\leq h\leq s-1$ and $f_{s,h}(x)=L_s(x)-U_{s,h}(x)$. By \eqref{eqdefR}, \eqref{eqthmdefLsn} and \eqref{eqthmdefU}, we have
\begin{align}
f_{s,h}(x)=&\sqrt{s(x-s)+\frac{(s-2)^2}{4}}-\sqrt{(s-h)(x-s+h)+\frac{\bigl(s-h-2-R_h(x-s+h)\bigr)^2}{4}} \notag\\
&-\frac{5h}{2}+\frac{3h(3h+1)}{2(x-s+h-1)}-\frac{1}{x-s}\label{appefsh}
\end{align}
and
\begin{align}
f_{s,h}'(x)=&\frac{s}{2\sqrt{s(x-s)+\frac{(s-2)^2}{4}}}
+\frac{1}{(x-s)^2}-\frac{3h(3h+1)}{2(x-s+h-1)^2}\notag\\
&-\frac{(s-h)-\dfrac{3h(3h+1)\bigl(s-h-2-R_h(x-s+h)\bigr)}{2(x-s+h-1)^2}}{
2\sqrt{(s-h)(x-s+h)+\dfrac{\bigl(s-h-2-R_h(x-s+h)\bigr)^2}{4}}}. \label{appefshdiff}
\end{align}
\subsection{Verification of $f_{s,h}'(x)>0$}\label{appemonoton}

Since
\[
\sqrt{(s-h)(x-s+h)+\frac{\bigl(s-h-2-R_h(x-s+h)\bigr)^2}{4}}\geq
\frac{\left|s-h-2-R_h(x-s+h)\right|}{2},
\]
by \eqref{appefshdiff}, we obtain
\begin{align}
f_{s,h}'(x)\geq&\frac{s}{2\sqrt{s(x-s)+\frac{(s-2)^2}{4}}}+\frac{1}{(x-s)^2}-\frac{3h(3h+1)}{(x-s+h-1)^2}\notag\\
&-\frac{s-h}{2\sqrt{(s-h)(x-s+h)+\frac{\bigl(s-h-2-R_h(x-s+h)\bigr)^2}{4}}}.\label{appefshdiffbound}
\end{align}

When $h\geq \frac{s}{2}$, we have $s-h\leq \frac{s}{2}$. Since $x-s\geq27s$, we have $s(x-s)+\frac{(s-2)^2}{4}\leq\frac{109}{108}s(x-s)$, and hence
\begin{equation*}
    \frac{s}{\sqrt{s(x-s)+\frac{(s-2)^2}{4}}}\geq\sqrt{\frac{108}{109}}\sqrt{\frac{s}{x-s}}.
\end{equation*}
Moreover,
\begin{equation*}
\frac{s-h}{\sqrt{(s-h)(x-s+h)+\frac{\bigl(s-h-2-R_h(x-s+h)\bigr)^2}{4}}}\leq
\sqrt{\frac{s-h}{x-s+h}}\leq
\frac{1}{\sqrt{2}}\sqrt{\frac{s}{x-s}}.
\end{equation*}
Since $3h(3h+1)\leq12s^2$ and
$x-s+h-1\geq x-s$, it follows from
\eqref{appefshdiffbound} and $s-h\leq \frac{s}{2}$ that
\begin{align*}
f_{s,h}'(x)&\geq\frac{1}{2}\left(\sqrt{\frac{108}{109}}-\frac{1}{\sqrt{2}}\right)\sqrt{\frac{s}{x-s}}+\frac{1}{(x-s)^2}-\frac{12s^2}{(x-s)^2}\\
&>\frac{1}{2}\sqrt{\frac{s}{x-s}}\left[\frac{7}{25}-24\left(\frac{s}{x-s}\right)^{\frac{3}{2}}\right]>0.
\end{align*}
Here we use $\sqrt{\frac{108}{109}}-\frac{1}{\sqrt{2}}
>\frac{7}{25}$, $\frac{x-s}{s}>27$, and $\frac{24}{27^{3/2}}<\frac{7}{25}$.

When $1\leq h<\frac{s}{2}$, let $A=s(x-s)+\frac{(s-2)^2}{4}$ and $B=(s-h)(x-s+h)+\frac{\bigl(s-h-2-R_h(x-s+h)\bigr)^2}{4}$. By \eqref{eqdefR}, we have $R_h(x-s+h)=6h-\frac{3h(3h+1)}{x-s+h-1}$. Since $0<R_h(x-s+h)<6h$ and $sR_h(x-s+h)+2s+2(s-h)-2s(s-h)
\geq-2s(s-h)$, a direct calculation gives
\begin{align*}
&s^2B-(s-h)^2A\\
=&s(s-h)hx+\frac{\bigl(sR_h(x-s+h)+2h\bigr)\bigl(sR_h(x-s+h)+2s+2(s-h)-2s(s-h)\bigr)}{4}\\
>&s(s-h)hx
-\frac{s(s-h)}{2}
\bigl(sR_h(x-s+h)+2h\bigr)>s(s-h)h\bigl(x-3s-1\bigr).
\end{align*}
Furthermore, $0<R_h(x-s+h)<6h$ and $h\leq s-1$ imply $-6s<s-7h-2<s-h-2-R_h(x-s+h)<s$, and thus $\left|s-h-2-R_h(x-s+h)\right|<6s$. Consequently, $A<\frac{37}{27}s(x-s)$ and $B<s(x-s)+10s^2<\frac{37}{27}s(x-s)$. Note that $h<\frac{s}{2}$. Therefore, we have
\begin{align}
\frac{s}{\sqrt{A}}-\frac{s-h}{\sqrt{B}}&=\frac{s^2B-(s-h)^2A}{\sqrt{AB}\bigl(s\sqrt{B}+(s-h)\sqrt{A}\bigr)}>\frac{s(s-h)h\bigl(x-3s-1\bigr)}{\sqrt{AB}\bigl(s\sqrt{B}+(s-h)\sqrt{A}\bigr)}\notag\\
&>\frac{s(s-h)h\bigl(x-3s-1\bigr)}{\left(\frac{37}{27}\right)^{3/2}
(2s-h)s(x-s)\sqrt{s(x-s)}}>\frac{s(s-h)h\bigl(x-3s-1\bigr)}{2\left(\frac{37}{27}\right)^{3/2}
s^2(x-s)\sqrt{s(x-s)}}\notag\\
&>\frac{s(s-h)h\bigl(x-3s-1\bigr)}{4s^2(x-s)\sqrt{s(x-s)}}>\frac{h(x-3s-1)}{8(x-s)\sqrt{s(x-s)}}. \label{appeABminus}
\end{align}
Since \(x-s\geq27s+11\), we have
\[
\frac{(x-s)(x-3s-1)}{s\sqrt{s(x-s)}}>25\sqrt{\frac{x-s}{s}}>25\sqrt{27}>96.
\]
Then
\begin{equation}\label{appeABminus2}
\frac{h(x-3s-1)}
{8(x-s)\sqrt{s(x-s)}}>\frac{12sh}{(x-s)^2}>\frac{24h^2}{(x-s)^2}\geq\frac{6h(3h+1)}{(x-s+h-1)^2},
\end{equation}
where the second inequality follows from $h<\frac{s}{2}$, and the last
inequality follows from $3h+1\leq 4h$ and $x-s+h-1\geq x-s$. Combining \eqref{appefshdiffbound}, \eqref{appeABminus} and \eqref{appeABminus2} gives
\[
f_{s,h}'(x)>\frac{1}{(x-s)^2}>0.
\]

Therefore, $f_{s,h}'(x)>0$ for every $x\geq28s+11$ and $1\leq h\leq s-1$.

\subsection{Verification of $f_{s,h}(28s+11)>0$}\label{appevalue}

By \eqref{appefsh}, we have
\begin{align}
f_{s,h}(28s+11)
={}&\sqrt{\frac{109}{4}s^2+10s+1}
-\sqrt{(s-h)(27s+h+11)
+\frac{\left(s-7h-2+
\frac{3h(3h+1)}{27s+h+10}\right)^2}{4}}
\notag\\
&-\frac{5h}{2}
+\frac{3h(3h+1)}{2(27s+h+10)}
-\frac{1}{27s+11}.
\label{appef28s11}
\end{align}

Let $X=\frac{3h(3h+1)}{27s+h+10}$, and let
\[
A=\frac{109}{4}s^2+10s+1,
\qquad
B=(s-h)(27s+h+11)+\frac{(s-7h-2+X)^2}{4},\qquad C=\frac{5h-X}{2}.
\]
Then by \eqref{appef28s11}, we have $f_{s,h}(28s+11)=\sqrt A-\sqrt B-C-\frac{1}{27s+11}$.

Since $\left(\frac{\sqrt{109}}{2}s+\frac{10}{\sqrt{109}}\right)^2
=\frac{109}{4}s^2+10s+\frac{100}{109}<A$, we have
\begin{equation}
\sqrt A>\frac{\sqrt{109}}{2}s+\frac{10}{\sqrt{109}}.
\label{appeAlower}
\end{equation}
By $h\leq s-1$, we obtain $27s+h+10-9(3h+1)=27s-26h+1\geq s+27>0$,
and thus
\begin{equation}
0<X<\frac h3.
\label{appeXbound}
\end{equation}

Let $K=\frac{\sqrt{109}}{2}s+\frac56-C$. It follows from $X>0$ and $h\leq s-1$ that
\begin{equation}\label{appeKgeq0}
    K=\frac{\sqrt{109}}{2}s+\frac56-C>
\frac{\sqrt{109}}{2}s+\frac56-\frac{5h}{2}
\geq
\frac{\sqrt{109}-5}{2}s+\frac{10}{3}>0.
\end{equation}
A direct calculation gives
\begin{align*}
K^2-B=&Xh-\frac{Xs}{2}+\frac{11X}{6}
-5h^2+\frac{59hs}{2}-\frac{h}{6}+\sqrt{109}s
\left(\frac{X}{2}-\frac{5h}{2}+\frac{5}{6}\right)
-10s-\frac{11}{36}.
\end{align*}
By \eqref{appeXbound}, $\frac {X}{2}-\frac{5h}{2}+\frac{5}{6}
<\frac {h}{6}-\frac{5h}{2}+\frac{5}{6}
=\frac{5-14h}{6}<0$. Since $\sqrt{109}<\frac{21}{2}$, it follows that
\begin{equation}\label{appeK2minusB}
    K^2-B>\frac{N}{36},
\end{equation}
where $N=X(171s+36h+66)+117hs-180h^2-6h-45s-11$.

Note that
\begin{align*}
&s(171s+36h+66)-(6s+h)(27s+h+10)\\=&9s^2+3hs-h^2+6s-10h\\=&h(11h-4)+21h(s-h)+9(s-h)^2+6(s-h)>0.
\end{align*}
Then we obtain $\frac{171s+36h+66}{27s+h+10}>6+\frac{h}{s}$. Consequently,
\[
X(171s+36h+66)=\frac{3h(3h+1)(171s+36h+66)}{27s+h+10}
>9h^2\left(6+\frac hs\right).
\]
Thus
\begin{align*}
N
&>\frac{9h(s-h)(13s-h)}s-6h-45s-11.
\end{align*}
Combining this with $h(s-h)-(s-1)=(h-1)(s-h-1)\geq 0$, $13s-h\geq12s+1$ and $h\leq s-1$ gives
\begin{align*}
N
&>\frac{9(s-1)(12s+1)}s
  -6(s-1)-45s-11\\
&=\frac{57s^2-104s-9}{s}\\
&=\frac{57(s-2)^2+124(s-2)+11}{s}>0.
\end{align*}
Therefore, $K^2>B$ by \eqref{appeK2minusB}. Since $K>0$ by \eqref{appeKgeq0}, we have $\sqrt{B}+C<
\frac{\sqrt{109}}{2}s+\frac{5}{6}$. Combining this with \eqref{appef28s11}, \eqref{appeAlower} and the facts that $27s+11\geq65$ and $\sqrt{109}<\frac{21}{2}$, we obtain
\begin{align*}
f_{s,h}(28s+11)>\frac{10}{\sqrt{109}}-\frac56
  -\frac{1}{27s+11}>\frac{20}{21}-\frac56-\frac1{65}
=\frac{283}{2730}>0.
\end{align*}
\end{document}